\newtheorem{teo}{Theorem}
\newtheorem{lemma}{Lemma}
\newtheorem{prop}{Proposition}
\newtheorem{defin}{Definition}
\newtheorem{cor}{Corollary}
\newenvironment{sistema}%
{\left\lbrace\begin{array}{@{}l@{}}}%
{\end{array}\right.}
\begin{document}

\title{\textbf{Relativistic pendulum and invariant curves}}

%

\author{\textbf{Stefano Marò} \\
\textit{\small{Dipartimento di Matematica - Università di Torino}}\\
\textit{\small{Via Carlo Alberto 10, 10123 Torino - Italy}}\\
\textit{\small{e-mail:stefano.maro@unito.it }}
}
\date{}

\maketitle

\begin{abstract}
We apply KAM theory to the equation of the forced relativistic pendulum to prove that all the solutions have bounded momentum. Subsequently, we detect the existence of quasiperiodic solutions in a generalized sense. This is achieved using a modified version of the Aubry-Mather theory for compositions of twist maps.
\end{abstract}

%
%
%

\section{Introduction}
In this paper we are concerned with some aspects of the dynamics of the differential equation
\begin{equation}\label{introeq}
\frac{d}{dt} \Bigl(  \frac{\dot x}{\sqrt{1-{\dot x}^2}}   \Bigr) + a\sin x = f(t),
\end{equation}
where $a>0$ is a parameter and $f:\mathbb{R}\rightarrow\mathbb{R}$ is a continuous and $T$-periodic real function satisfying
\begin{equation}\label{nm}
\int_0^Tf(t)dt=0.
\end{equation}
This equation, sometimes called the forced relativistic pendulum, has been considered by several authors. In \cite{torres1} Torres proved the existence of a $T$ periodic solution after imposing some restrictions on the period and the size of $f$. Later, Brezis and Mawhin \cite{brezismawhin} proved the existence of a $T$-periodic solution for any $f$. The existence of a second $T$-periodic solution has been proved in \cite{bereanutorres, bereanujebeleanmawhin}. See also \cite{marotopol, fondatoader} for an alternative approach to the periodic problem. The equation (\ref{introeq}) can be seen as a relativistic counterpart of the classical Newtonian pendulum 
\begin{equation}\label{introcl}
\ddot{x}+a\sin x=f(t).
\end{equation}
This equation has been analysed from many points of view. In particular Levi \cite{levikam} and You \cite{you} proved that all the solutions of (\ref{introcl}) have bounded velocity $\dot{x}(t)$ whenever (\ref{nm}) holds. The relativistic framework implies that $|\dot{x}(t)|<1$ and so the boundedness of the velocity is automatic. However we will prove that the results by Levi and You have a relativistic parallel when the velocity is replaced by the momentum
$$
p(t)=\frac{\dot{x}(t)}{\sqrt{1-{\dot{x}(t)}^2}}.
$$  
The main result of this paper says that if $f(t)$ satisfies (\ref{nm}) then all solutions of (\ref{introeq}) satisfy
$$
\sup_{t\in\mathbb{R}}|p(t)|<\infty.
$$
Moreover we will prove that condition (\ref{nm}) is essential for this conclusion. In addition we will prove the existence of generalized quasi-periodic solutions with two frequencies
$$
\omega_1=\frac{2\pi}{T},\quad \omega_2\in(-1,1)
$$
Note that we find  solutions for each frequency $\omega_2$ and these solutions are quasi-periodic when the phase space of the pendulum is a cylinder. These solutions become subharmonic solutions when $\omega_1$ and $\omega_2$ are commensurable.

To prove these results we consider the Hamiltonian formulation of (\ref{introeq}) where the position $q=x$ and the momentum $p=\frac{\dot{x}}{\sqrt{1-\dot{x}^2}}$ are conjugate variables. After some changes of coordinates we will write the associated Poincaré map in a form such that Moser Twist Theorem is applicable and so invariant curves exist. This property already implies the boundedness of the momentum. To apply Moser's theorem, estimates is some $C^k$ norm are needed. These estimates usually are tedious and cumbersome and one has to find the right way case by case. This is why trying to repeat the direct computation by Levi or the change of variable by You, one is lead to non trivial technical difficulties. Anyway, a more general technique, inspired by \cite{ortegalisboa} and based on the differentiability with respect to the parameter of the solution of a differential equation, will simplify significantly the computations. Moreover, it could also provide a simpler proof of the results of Levi and You for the Newtonian case. We also stress the fact that we can consider the case in which the period of the forcing and the period of the potential are not the same.  Furthermore, the generality of this argument allows to consider a general nonlinearity $g(x)$ in (\ref{introeq}).\\
To prove the existence of periodic and generalized quasi-periodic solutions, one can use the theory of Aubry and Mather \cite{mathertop}. In principle, to apply this theory we need to known that the Poincaré map of equation (\ref{introeq}) has twist. In the paper \cite{marotopol} it was shown that it does not hold unless a restriction on the parameters is imposed, namely the condition $a\leq\frac{\pi^2}{T^2}$ is necessary. Since we want to obtain results for arbitrary parameters we will apply a less standard version of Aubry-Mather theory. In \cite{matherams} it is shown that the main conclusion of this theory still holds when the map is obtained as a finite composition of twist maps. The Poincaré map $\Pi$ of equation (\ref{introeq}) can be seen as a finite composition $\Pi=f_1\circ\dots\circ f_N$ where every $f_i$ is a "small-time-map" that is twist without any restriction. To consider finite composition of twist map is a great novelty in \cite{matherams} and many results on twist maps admit an extension to this setting. In \cite{matherams} the twist of each map $f_i$ goes to infinity as the action goes to infinity. The relativistic effects prevents the velocity from being too large and this makes impossible to satisfy this assumption of large twist. For this reason we must modify Mather's theorem in order to adjust it to our situation. With this modified theorem we can produce periodic and quasi-periodic solutions whose oscillating properties are determined by the rotation number of the corresponding Mather set.

\section{Motions with bounded momentum}
Consider the equation
\begin{equation}\label{introeq2}
\frac{d}{dt} \Bigl(  \frac{\dot x}{\sqrt{1-{\dot x}^2}}   \Bigr) - g(x) = f(t)
\end{equation}
and assume that the functions $f$ and $g$ satisfy the following conditions 
\begin{itemize}
\item[(A1)] $g\in C^7(\mathbb{R})$, $g(x+S)=g(x)$, $\int^S_0g(x)dx=0$ 
\item[(A2)] $f\in C(\mathbb{R})$, $f(t+T)=f(t)$, $\int^T_0 f(t)dt=0$.
\end{itemize}
where $T$ and $S$ are two positive numbers. Notice that when $g(x)=-a\sin x$ and $S=2\pi$ we recover equation (\ref{introeq}). \\
Equation (\ref{introeq2}) is in the Lagrangian framework. Actually it can be expressed in the form
$$
\frac{d}{dt}\left(\frac{\partial L}{\partial\dot{x}}\right)-\frac{\partial L}{\partial x}=0 
$$ 
where
$$
L(x,\dot{x},t)=-\sqrt{ 1- \dot{x}^2 }+G(x)+f(t)x.
$$ 
Here $G$ represents a primitive of $g$. Notice that $G$ is $S$-periodic and of class $C^8$. 
\\
To our purposes, it will be convenient to pass to the Hamiltonian formulation,   
\begin{equation}\label{rpend1}
\begin{sistema}
\dot{q}=H_p=\frac{p}{\sqrt{1+p^2}}\\
\dot{p}=-H_q=g(q)+f(t)
\end{sistema}
\end{equation}
with $H(t,q,p)=\sqrt{1+p^2}-G(q)-f(t)q$.
We arrive to this system after having performed the classical Legendre transformation
\begin{equation}\label{cv}
\begin{sistema}
q=x\\
p=\frac{\dot{x}}{\sqrt{1-\dot{x}^2}}.
\end{sistema}
\end{equation}
From now on the symplectic coordinate $p$ will be called the momentum. The Hamiltonian vector field $(H_p,-H_q)$ is bounded so all solutions of (\ref{rpend1}) are globally defined and the same holds for the solutions of (\ref{introeq2}) undoing the change of variables.\\
Notice that, due to the relativistic structure, the velocity of any solution is bounded and satisfies
$$
|\dot{x}(t)|<1\quad\mbox{for each }t\in\mathbb{R}.
$$
We will prove that also the momentum is bounded. This is equivalent to the more restrictive condition on the velocity,
$$
\sup_{t\in\mathbb{R}}|\dot{x}(t)|<1.
$$
Precisely
\begin{teo}\label{teo1}
Assume that (A1) and (A2) hold. Then every solution $(q(t),p(t))$ of (\ref{rpend1}) satisfies
$$
\sup_{t\in\mathbb{R}}|p(t)|<\infty.
$$
\end{teo}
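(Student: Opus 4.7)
The plan is to apply Moser's invariant curve theorem to the Poincar\'e map of the Hamiltonian system (\ref{rpend1}) in the region of large momentum, and then conclude the boundedness of $p(t)$ by a standard trapping argument on the cylinder.

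First, I would let $\Pi$ denote the time-$T$ map of (\ref{rpend1}) acting on the cylinder $\mathbb{R}/S\mathbb{Z}\times\mathbb{R}$, which is well defined because the Hamiltonian vector field is bounded and solutions exist globally, as remarked right after (\ref{cv}). Any essential invariant curve $\Gamma\subset\{|p|>p^{*}\}$ of $\Pi$ separates the cylinder into two unbounded components, each of them $\Pi$-invariant. Hence a solution $(q(t),p(t))$ whose datum at $t=0$ is trapped between one such curve in $\{p>p^{*}\}$ and one in $\{p<-p^{*}\}$ has its iterates $\Pi^{n}$ confined to a bounded strip, and then continuity of the flow on $[0,T]$ propagates the bound to every $t\in\mathbb{R}$. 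It is therefore sufficient to produce essential invariant curves of $\Pi$ at every sufficiently large level of momentum.

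Second, to put $\Pi$ in a form suitable for Moser's theorem, I would exploit that for $|p|$ large the velocity $\dot q=p/\sqrt{1+p^{2}}$ is close to $\mathrm{sgn}(p)$ and bounded away from zero. Working, say, in $\{p>p^{*}\}$, this lets one use $q$ as a new independent variable in place of $t$ via the implicit function theorem, producing a non-autonomous system for $(t,p)$ that is $S$-periodic in the new ``time'' $q$. The zero-mean assumptions $\int_{0}^{S}g(q)\,dq=0$ in (A1) and $\int_{0}^{T}f(t)\,dt=0$ in (A2) ensure that, after integration over one spatial period, the drift in $p$ cancels to leading order, so that the corresponding return map takes the standard form
\begin{equation*}
(\theta_{0},p_{0})\;\longmapsto\;\bigl(\theta_{0}+\alpha(p_{0})+F(\theta_{0},p_{0}),\;p_{0}+G(\theta_{0},p_{0})\bigr),
\end{equation*}
with a genuine twist $\alpha'(p_{0})\neq 0$ inherited from the strict monotonicity of $p\mapsto p/\sqrt{1+p^{2}}$, and with remainders $F,G\to 0$ as $p_{0}\to+\infty$. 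An analogous construction works in $\{p<-p^{*}\}$.

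Third, Moser's theorem requires $F,G$ to be small not only pointwise but in a sufficiently high $C^{k}$ norm, and this is the role of the regularity assumption $g\in C^{7}$ in (A1). I would avoid any direct differentiation of the flow and instead follow the strategy of \cite{ortegalisboa}: introduce an auxiliary small parameter $\varepsilon=1/p_{0}$, rewrite the equations in coordinates that depend smoothly on $\varepsilon$ down to $\varepsilon=0$ (where the dynamics is trivially integrable, with $\dot q\equiv\mathrm{sgn}(p_{0})$), and then invoke the standard theorem on $C^{k}$-dependence of solutions of ODEs on parameters to transfer smallness at $\varepsilon=0$ into the desired $C^{k}$ smallness of $F$ and $G$. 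I expect the main obstacle to be exactly this last step: identifying coordinates in which the relativistic saturation $|\dot q|<1$, and the resulting rapid decay of $\alpha'(p_{0})$, are absorbed cleanly into a regular dependence on $\varepsilon$, so that the Moser smallness condition (balancing the perturbation against the degenerating twist) is actually met. Once the change of variables is set up, Moser's theorem yields invariant curves accumulating at $p=+\infty$ and at $p=-\infty$, and the trapping argument of the first step concludes the proof.
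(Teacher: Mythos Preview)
Your overall plan---Moser invariant curves plus a trapping argument, with the $C^k$ estimates obtained via smooth dependence on a small parameter as in \cite{ortegalisboa}---is the paper's strategy, but there is a real gap in your second step. You assert that the zero-mean hypotheses make the drift in $p$ cancel to leading order, so that the return map takes the form $(\theta_0+\alpha(p_0)+F,\,p_0+G)$ with $F,G\to 0$. This is not correct. For the time-$T$ map at large $p_0$ one has $q(t)\approx q_0+t$, hence
\[
p_1-p_0=\int_0^T g(q(t))\,dt+\int_0^T f(t)\,dt\;\approx\;G(q_0+T)-G(q_0),
\]
an $O(1)$ oscillation in $q_0$ that does \emph{not} vanish as $p_0\to\infty$; the same obstruction appears in your $q$-time picture, where $\int_0^S g=0$ but $\int_0^S f(t(q))\,dq\approx F(t_0+S)-F(t_0)$ survives. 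Meanwhile the twist is only $q_1-q_0-T\approx -T/(2p_0^{2})$, so after any rescaling the perturbation in the action variable is \emph{larger} than the twist, and Moser's small twist theorem does not apply. The paper's remedy---the idea missing from your sketch---is a preliminary symplectic change $p=P+G(q)+F(t)$ (both zero-mean conditions are used precisely to make $G$ and $F$ bounded and periodic). After it, $\dot P=g(Q)\bigl(1-\dot Q\bigr)=O(P^{-2})$, and the rescaling $P=1/(\delta v)$ puts the Poincar\'e map in the form (\ref{papprox}), with twist and remainders both of order $\delta^{2}$ and the remainders tending to zero in $C^{5}$.

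Two further differences. The paper keeps $t$ as the independent variable throughout and does not switch to $q$; it remarks explicitly that repeating the Levi--You change leads to nontrivial technical difficulties in the relativistic setting, so your route, while legitimate, is likely harder to execute. And you do not mention the intersection property required by Moser's theorem; the paper obtains it from the exact symplectic structure of $\Pi$, which again relies on $\int_0^T f=0$.
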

Moreover, we will show that the null mean value of the function $f$ is an essential condition in the above theorem.
\begin{prop}\label{prop1}
Assume that (A1) holds and that $f$ is a continuous and $T$-periodic function satisfying 
$$
\bar{f}=\frac{1}{T}\int^T_0f(t)dt\neq 0.
$$
Then there exists $R>0$ such that if $(q(t),p(t))$ is a solution of (\ref{rpend1}) with $|p(0)|\geq R$, the momentum satisfies
$$
\lim_{t\to\infty}|p(t)|=\infty.
$$ 
\end{prop}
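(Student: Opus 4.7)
\emph{Plan.} The starting point is the identity
\[
p(t) - p(0) = \bar{f}\,t + \Phi(t) + \int_0^t g(q(s))\,ds,
\]
where $\Phi(t) := \int_0^t (f(s)-\bar{f})\,ds$ is continuous, $T$-periodic and therefore uniformly bounded. It suffices to show that the last integral stays bounded along orbits with $|p(0)|$ large, for then $p(t)=\bar{f}\,t+O(1)$ and $|p(t)|\to\infty$.

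To bound this integral, I would employ a first-order normal form adapted to the large-momentum regime. On each half-plane $\{p>0\}$ and $\{p<0\}$ define the averaged momentum
\[
P(t) := p(t) - \epsilon\,G(q(t)) - \Phi(t), \qquad \epsilon := \mathrm{sgn}(p),
\]
where $G$ is the $S$-periodic primitive of $g$ provided by (A1). Differentiating along (\ref{rpend1}) one computes
\[
\dot{P} = \bar{f} + g(q)\bigl(1-\epsilon\,\dot{q}\bigr) = \bar{f} + O(1/p^2),
\]
since $1-|p|/\sqrt{1+p^2} = 1/[\sqrt{1+p^2}(\sqrt{1+p^2}+|p|)] = O(1/p^2)$. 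The uniform bound $|P-p|\le\|G\|_\infty+\|\Phi\|_\infty$ transfers linear growth of $P$ to linear growth of $p$ with the same slope.

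Assume without loss of generality $\bar{f}>0$. When $p(0)\ge R$, choose $R$ so large that $\|g\|_\infty/(2R^2) < \bar{f}/2$ and $R/2 > 2(\|G\|_\infty+\|\Phi\|_\infty)$. A standard continuation argument then closes the proof: setting $T^* := \sup\{t\ge 0 : p(s)\ge R/2\ \forall s\in[0,t]\}$ and integrating the inequality $\dot{P}\ge\bar{f}/2$ on $[0,T^*]$ gives $p(t)\ge R/2+\bar{f}\,t/2$ there, hence $T^*=\infty$ by continuity and $p(t)\to+\infty$. When $p(0)\le -R$, the same computation with $\epsilon=-1$ shows that $P$ grows at slope $\bar{f}$ while $p<0$, so the orbit leaves the half-plane $\{p\le -R/2\}$ in a time $O(R/\bar{f})$.

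The main obstacle will be the transit through the middle region $|p|\le R/2$ in the case where $p(0)$ and $\bar{f}$ have opposite signs, since there the normal form loses effectiveness. To prevent trapping near $p=0$, I would combine the a priori bound $|\dot{p}|\le\|g\|_\infty+\|f\|_\infty$ (which controls the sojourn time in any compact interval) with the energy-type identity
\[
\sqrt{1+p(t)^2} = \sqrt{1+p(0)^2} + G(q(t))-G(q(0)) + \int_0^t f(s)\,\dot{q}(s)\,ds,
\]
using that the nonzero mean $\bar{f}$ drives the last integral and forces $\sqrt{1+p(t)^2}$ to grow, eventually pushing the orbit into the opposite large-$|p|$ half-plane where the first case applies and yields $|p(t)|\to\infty$.
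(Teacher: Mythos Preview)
For the case $\operatorname{sgn} p(0)=\operatorname{sgn}\bar f$ your argument is correct and is essentially the paper's proof recast in continuous time: the paper iterates the Poincar\'e map and uses the Lyapunov function $V(q,p)=p-G(q)$ (there is a misprint in the displayed formula), showing $V(\Pi(q,p))\ge V(q,p)+\tfrac{T\bar f}{2}$ for $p_0$ large; your quantity $P=p-\epsilon G(q)-\Phi(t)$, satisfying $\dot P\ge\bar f/2$, is the flow-level version of the same device, and sampling it at $t=nT$ recovers exactly the paper's inequality.

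The genuine gap is in the transit through the middle region when $p(0)$ and $\bar f$ have opposite signs. Neither of the two tools you invoke does what you need. The bound $|\dot p|\le\|g\|_\infty+\|f\|_\infty$ is an \emph{upper} bound on the speed of $p$; it gives no lower bound and therefore no control at all on the sojourn time in a compact $p$-interval. And in the energy identity the integral $\int_0^t f(s)\dot q(s)\,ds$ is not forced to grow by $\bar f\ne 0$: splitting $f=\bar f+(f-\bar f)$, the drift contribution is $\bar f\bigl(q(t)-q(0)\bigr)$, and in the middle region nothing makes $q$ drift in a definite direction, so $\sqrt{1+p^2}$ can remain bounded. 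Since bounded orbits (for instance periodic solutions) do exist in the middle region, no argument that looks only at the dynamics there can push every trajectory through. The paper, incidentally, does not treat this case either: it handles $\bar f>0$ with $p_0\to+\infty$ and declares ``the other case'' (meaning $\bar f<0$) similar, so what is actually established is the one-sided statement $p(0)\ge R\Rightarrow p(t)\to+\infty$ for $\bar f>0$, which already suffices to show that the zero-mean hypothesis on $f$ is essential for Theorem~\ref{teo1}.
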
 
Proof of these results will be presented in the following sections. Moreover, we will perform the proof for the case $S=1$, being conjugated to the general one through a change of scale.

\section{The approximated Poincaré map}
The solution of (\ref{rpend1}) satisfying the initial condition
$$
q(0)=q_0,\quad p(0)=p_0
$$
will be denoted by $(q(t;q_0,p_0),p(t;q_0,p_0))$. The main tool of our work will be the Poincaré map, the area preserving diffeomorphisms of the plane defined by
$$
\Pi:\mathbb{R}^2\rightarrow\mathbb{R}^2, \quad \Pi(q_0,p_0)=(q(T;q_0,p_0),p(T;q_0,p_0)).
$$
The periodicity of $g$ (remember that we suppose $S=1$) implies that $\Pi$ satisfies
$$
\Pi(q_0+1,p_0)=\Pi(q_0,p_0)+(1,0)
$$  
and so $\Pi$ induces a diffeomorphism of the cylinder $\mathbb{T}\times\mathbb{R}$, where $\mathbb{T}=\mathbb{R}/\mathbb{Z}$.\\
On the other hand, the periodicity of $f$ allows to describe the dynamics of system (\ref{rpend1}) in terms of the map $\Pi$. In particular, the condition
$$
\sup_{t\in\mathbb{R}}|p(t;q_0,p_0)|<\infty
$$
is equivalent to 
$$
\sup_{n\in\mathbb{Z}}|p_n|<\infty
$$
where $(q_n,p_n)=\Pi^n(q_0,p_0)$. Similarly,
$$
\lim_{t\to\infty}|p(t;q_0,p_0)|=\infty
$$
is equivalent to
$$
\lim_{n\to\infty}|p_n|=\infty.
$$
Notice that the boundedness of the vector field $(H_p,-H_q)$ plays a role in the proof of this equivalence.\\
In view of this equivalences, to prove theorem \ref{teo1} we shall look for non contractible invariant curves for the map $\Pi$. Two disjoint invariant curves define an annulus that is invariant under the diffeomorphism $\Pi$, so we can say that they act as barriers. Our aim will be to apply Moser's small twist theorem to the Poincaré map $\Pi$. With the promise of being more precise later on, we recall that Moser's theorem gives the existence of invariant curves for a class of maps of the cylinder whose lift has the form
\begin{equation}\label{mos}
\begin{sistema}
\theta_1=\theta+\omega+\delta[\alpha(r) +  R_1(\theta,r)]\\
r_1=r+\delta R_2(\theta,r)
\end{sistema}
\end{equation}   
supposing that the reminders $R_1$ and $R_2$ were small in some $C^k$ norm. Here $\delta$ plays the role of a small parameter.\\
The coordinates $(q,p)$ are not the best ones to have the Poincaré map written in form (\ref{mos}), so perform the following symplectic change of variables
\begin{equation*}
\begin{sistema}
q=Q \\
p=P+G(q)+F(t)
\end{sistema}
\end{equation*}
where $F(t)$ is a primitive of $f$. Note that that $F(t)$ is $T$-periodic and $C^1$. We get the system
\begin{equation}\label{rpend3}
\begin{sistema}
\dot{Q}=\frac{ P+G(Q)+F(t)}{\sqrt{1+(P+G(Q)+F(t))^2}}\\
\dot{P}=g(Q)(1-\frac{ P+G(Q)+F(t)}{\sqrt{1+(P+G(Q)+F(t))^2}})
\end{sistema}
\end{equation} 
Now we can introduce the small parameter $\delta>0$ through the following change of scale
\begin{equation}\label{change}
Q=u,\quad P=\frac{1}{\delta v} \quad v\in[1/2,7/2].
\end{equation} 
It is important to note that the strip $\mathbb{R}\times [1/2,7/2]$ corresponds in the original variables to the time dependent region
$$
A_\delta=\{(q,p)\in\mathbb{R}^2 :\: \frac{2}{7\delta}+G(q)+F(t)\leq p\leq \frac{2}{\delta}+G(q)+F(t) \}
$$
and so from the boundedness of $F$ and $G$
\begin{equation}\label{dove}
 p\to\infty \mbox{ as } \delta\to 0 \mbox{ uniformly in }v. 
 \end{equation}
System (\ref{rpend3}) transforms into 
\begin{equation}\label{rpend2}
\begin{sistema}
\dot{u}=\frac{1+\delta v[G(u)+F(t)]}{\sqrt{\delta^2 v^2+(1+\delta v[G(u)+F(t)])^2}}\\
\dot{v}=-\delta v^2g(u)[1-\frac{1+\delta v[G(u)+F(t)]}{\sqrt{\delta^2 v^2+(1+\delta v[G(u)+F(t)])^2}}].
\end{sistema}
\end{equation}
The change of variables (\ref{change}) is not symplectic, but the Poincaré map of systems (\ref{rpend2}) is still conjugated to $\Pi$.\\
Note that if $\delta=0$ system (\ref{rpend2}) transforms into  
\begin{equation*}
\begin{sistema}\label{sisze}
\dot{u}=1\\
\dot{v}=0
\end{sistema}
\end{equation*} 
and taking any initial condition $(u_0,v_0)\in\mathbb{R}\times (1/2,7/2)$ we have that the solution is well-defined for $t\in [0,T]$. So, by continuous dependence, there exists $\Delta>0$ such that the if $\delta\in [0,\Delta]$ the solution is still well-defined for $t\in [0,T]$.
The coordinates $(u,v)$ are the good ones to have the Poincaré map written in form (\ref{mos}). To have a rough idea of why this is true, one can see trough a formal computation that system (\ref{rpend2}) has the following expansion for small $\delta$
\begin{equation*}
\begin{sistema}
\dot{u}=1-\frac{1}{2}\delta^2v^2+O(\delta^3)\\
\dot{v}=O(\delta^3).
\end{sistema}
\end{equation*}  
Notice the fundamental fact that up to second order $F$ and $G$ do not play any role. Now one can obtain the Poincaré map integrating and evaluating at $t=T$.\\
We are going to make this argument rigorous and the key is the theory of differentiability with respect to the parameters. So, inspired by \cite{ortegalisboa}, let us recall some general facts. Consider a differential equation depending on a parameter
\begin{equation}\label{sisgen}
\frac{dz}{dt}=\Psi(t,z,\delta)
\end{equation}
where $\Psi:[0,T]\times\mathcal{D}\times [0,\Delta]\rightarrow\mathbb{R}^n$ is of class $C^{0,\nu+2,\nu+2}$, $\nu\geq 1$ and $\mathcal{D}$ is an open connected subset of $\mathbb{R}^n$ and $\Delta>0$. The general theory of differential equations says that the solution $z(t,z_0,\delta)$ is of class $C^{0,\nu+2,\nu+2}$ in its three arguments. The following lemma will be crucial for our purpose, and generalizes the result \cite[Proposition 6.4]{ortegalisboa}.
\begin{lemma}\label{derpar}
Let $K$ be a compact set of $\mathcal{D}$ such that for every $z_0\in K$ and $\delta \in [0,\Delta]$ the solution is well defined in $[0,T]$. Then, for every $(t,z,\delta)\in [0,T]\times K\times [0,\Delta]$ the following expansion holds
$$
z(t,z_0,\delta)=z(t,z_0,0)+ \delta\frac{\partial z}{\partial\delta}(t,z_0,0)+\frac{\delta^2}{2}\frac{\partial^2 z}{\partial\delta^2}(t,z_0,0)+\frac{\delta^2}{2} R(t,z_0,\delta)
$$
where
$$
||R(t,\cdot ,\delta)||_{C^\nu(K)}\to 0 \quad\mbox{as }\delta \to 0
$$
uniformly in $t\in [0,T]$.
\end{lemma}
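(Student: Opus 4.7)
The plan is to use Taylor's theorem with integral remainder in the parameter $\delta$, together with the classical regularity theory for ODEs depending on parameters. First I would invoke the standard result that since $\Psi \in C^{0,\nu+2,\nu+2}$, the solution $z(t,z_0,\delta)$ is itself $C^{0,\nu+2,\nu+2}$ on $[0,T]\times K\times[0,\Delta]$. In particular, every mixed partial derivative $\partial_{z_0}^\alpha \partial_\delta^j z$ with $|\alpha|+j\le\nu+2$ exists and is jointly continuous.

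Next, viewing $z(t,z_0,\delta)$ as a $C^2$ function of $\delta$ (with $t$, $z_0$ fixed), I would write the second-order Taylor expansion with integral remainder
\begin{equation*}
z(t,z_0,\delta)=z(t,z_0,0)+\delta\,\partial_\delta z(t,z_0,0)+\int_0^\delta(\delta-s)\,\partial_\delta^2 z(t,z_0,s)\,ds.
\end{equation*}
Adding and subtracting $\frac{\delta^2}{2}\partial_\delta^2 z(t,z_0,0)$ and comparing with the expansion in the statement forces the identification
\begin{equation*}
R(t,z_0,\delta)=\frac{2}{\delta^2}\int_0^\delta(\delta-s)\bigl[\partial_\delta^2 z(t,z_0,s)-\partial_\delta^2 z(t,z_0,0)\bigr]\,ds.
\end{equation*}

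Then I would estimate $R$ in the $C^\nu(K)$ norm. For any multi-index $\alpha$ with $|\alpha|\le\nu$, the regularity of $z$ allows differentiation under the integral sign:
\begin{equation*}
\partial_{z_0}^\alpha R(t,z_0,\delta)=\frac{2}{\delta^2}\int_0^\delta(\delta-s)\bigl[\partial_{z_0}^\alpha\partial_\delta^2 z(t,z_0,s)-\partial_{z_0}^\alpha\partial_\delta^2 z(t,z_0,0)\bigr]\,ds.
\end{equation*}
Since $|\alpha|+2\le\nu+2$, the integrand is a difference of continuous functions on the compact set $[0,T]\times K\times[0,\Delta]$, and is therefore uniformly continuous. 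Denoting by $\omega_\alpha(\sigma)$ the modulus of continuity in the $\delta$-variable, we get $|\partial_{z_0}^\alpha\partial_\delta^2 z(t,z_0,s)-\partial_{z_0}^\alpha\partial_\delta^2 z(t,z_0,0)|\le\omega_\alpha(s)$ uniformly in $(t,z_0)$. Using $\int_0^\delta(\delta-s)\,ds=\delta^2/2$, this yields $\|\partial_{z_0}^\alpha R(t,\cdot,\delta)\|_{C^0(K)}\le\omega_\alpha(\delta)$, which tends to $0$ as $\delta\to 0$ uniformly in $t$. Summing (or taking the max) over $|\alpha|\le\nu$ gives the claimed $C^\nu(K)$ estimate.

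The proof is really just Taylor's theorem plus uniform continuity; the one point that requires some care, and is essentially the only obstacle, is verifying that the hypothesis $\Psi\in C^{0,\nu+2,\nu+2}$ is exactly what is needed so that the mixed partials $\partial_{z_0}^\alpha\partial_\delta^2 z$ with $|\alpha|\le\nu$ both exist and are jointly continuous (so that differentiation under the integral and the use of uniform continuity are legitimate). Once this regularity budget is checked, no further work is needed.
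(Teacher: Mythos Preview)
Your proof is correct and follows essentially the same route as the paper: Taylor's formula in $\delta$ with integral remainder, followed by uniform continuity of the mixed derivatives $\partial_{z_0}^\alpha\partial_\delta^2 z$ on the compact set. The only cosmetic difference is that the paper first writes the third-order integral remainder and then integrates by parts to recover the second-order form you wrote down directly; your version is slightly more streamlined and makes the regularity bookkeeping $|\alpha|+2\le\nu+2$ explicit.
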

\begin{proof}
For a function $\phi\in C^{0,\nu+2,\nu+2}([0,T]\times K\times [0,\Delta])$, the Taylor formula with remainder in integral form gives
\begin{equation*}
\phi(t,z_0,\delta)=\phi(t,z_0,0)+ \frac{\partial \phi}{\partial\delta}(t,z_0,0)\delta+\frac{\delta^2}{2}\frac{\partial^2 \phi}{\partial\delta^2}(t,z_0,0)+ R_2(t,z_0,\delta) 
\end{equation*}
where
$$
R_2(t,z_0,\delta)=\frac{1}{2}\int_0^\delta \frac{\partial^3 \phi}{\partial\delta^3}(t,z_0,\xi)(\delta-\xi)^2d\xi. 
$$ 
Integrating by parts one gets
$$
R_2(t,z_0,\delta)=\frac{1}{2} \{2\int_0^\delta \frac{\partial^2 \phi}{\partial\delta^2}(t,z_0,\xi)(\delta-\xi)d\xi-\frac{\partial^2 \phi}{\partial\delta^2}(t,z_0,0)\delta^2         \}
$$
and through the change of variable $\xi=\delta s$ we get
$$
R_2(t,z_0,\delta)=\delta^2\int_0^1 (1-s)[\frac{\partial^2 \phi}{\partial\delta^2}(t,z_0,\delta s)-\frac{\partial^2 \phi}{\partial\delta^2}(t,z_0,0)]ds.
$$
from which it is easy to conclude using the regularity of the solution.
\end{proof}
Note that, by means of this lemma we have a semi-explicit formula for the solution of (\ref{sisgen}). This is very useful to compute its Poincaré map. So, let us apply the previous lemma to system (\ref{rpend2}). 
First of all, calling $Z=(u,v)$, system (\ref{rpend2}) can be written in the form 
$$
\dot{Z}=\Psi(t;Z,\delta).
$$
The initial condition will be denoted by $Z(0)=z_0=(u_0,v_0)$ and the corresponding solution by $z(t;z_0,\delta)=(u(t;u_0,v_0,\delta),v(t;u_0,v_0,\delta))$. We will suppose, by periodicity, that $z_0\in[0,1]\times [1,3]$. From (\ref{sisze}) we have that
\begin{equation}\label{zerder}
z(t;u_0,v_0,0)=(u_0+t,v_0).
\end{equation}
To compute the first derivative with respect to the parameter let us call $X(t;z_0,\delta)=\frac{\partial z}{\partial\delta}(t;z_0,\delta)$. 
We need $X(t;z_0,0)$ that solves the Cauchy problem
\begin{equation*}
\begin{sistema}
\dot{X}=A(t)X+a(t)\\
X(0)=0.
\end{sistema}
\end{equation*}
where
$$
A(t)=\frac{\partial \Psi}{\partial Z}(t;z(t;z_0,0),0),\quad a(t)=\frac{\partial \Psi}{\partial\delta}(t;z(t;z_0,0),0).
$$
A simple computation gives
\begin{equation}\label{jacob}
\frac{\partial \Psi}{\partial Z}(t;Z,0)=0  \quad  \frac{\partial\Psi}{\partial\delta}(t;Z,0)=0 
\end{equation}
so that
\begin{equation}\label{primder}
X(t;u_0,v_0,0)=0.
\end{equation}
Now let us compute the second derivative. Let us call $Y(t;z_0,\delta)=\frac{\partial^2 z}{\partial\delta^2}(t;z_0,\delta)$ with components $(\xi(t;z_0,\delta),\eta(t;z_0,\delta))$. We need $Y(t;z_0,0)$ that solves the Cauchy problem
\begin{equation*}
\begin{sistema}
\dot{Y}=A(t)Y+b(t) \\
Y(0)=0
\end{sistema}
\end{equation*}
where
\begin{multline*}
b(t)=\frac{\partial^2 \Psi}{\partial\delta^2}(t;z(t;z_0,0),0)+2\frac{\partial^2 \Psi}{\partial Z\partial\delta}(t;z(t;z_0,0),0)X(t;z_0,0)\\ +\frac{\partial^2 \Psi}{\partial Z^2}(t;z(t;z_0,0),0)[X(t;z_0,0),X(t;z_0,0)]
\end{multline*}
and $\frac{\partial^2 \Psi}{\partial Z^2}(t;z(t;z_0,0),0)$ is interpreted as a bilinear form from $\mathbb{R}^2\times\mathbb{R}^2$ into $\mathbb{R}^2$.  
A simple computation gives
$$
\frac{\partial^2 \Psi}{\partial\delta^2}(t;z(t;z_0,0),0)=(-v_0^2,0).
$$  
From (\ref{jacob}) and (\ref{primder}) we get the system
\begin{equation*}
\begin{sistema}
\dot{\xi}=-v_0^2,\quad \xi(0)=0\\
\dot{\eta}=0,\quad \eta(0)=0
\end{sistema}
\end{equation*}
leading to
\begin{equation}\label{seconder}
Y(t;u_0,v_0,0)=(-v_0^2t,0).
\end{equation}
Next we apply lemma \ref{derpar} using (\ref{zerder}), (\ref{primder}) and (\ref{seconder}). We have that
$$
Z(t;u_0,v_0,\delta)=(u_0+t,v_0)+\frac{\delta^2}{2}(-v_0^2t,0)+\frac{\delta^2}{2}R(t;u_0,v_0,\delta),
$$
where the remainder $R$ satisfies the estimate
$$
||R(t,\cdot ,\delta)||_{C^5([0,1]\times[1,3])}\to 0 \quad\mbox{as }\delta \to 0
$$
uniformly in $t\in [0,T]$. Finally, evaluating at $t=T$ we get the following expression for the Poincaré map
\begin{equation}\label{papprox}
\begin{sistema}
u_1=u_0+T-\frac{\delta^2}{2} Tv_0^2 + \frac{\delta^2}{2} R_1(u_0,v_0,\delta)\\
v_1=v_0+\frac{\delta^2}{2} R_2(u_0,v_0,\delta)
\end{sistema}
\end{equation}
and
\begin{equation}\label{resto}
||R_1(\cdot ,\cdot,\delta)||_{C^5(\mathbb{R}/\mathbb{Z}\times[1,3])}+||R_2(\cdot ,\cdot,\delta)||_{C^5(\mathbb{R}/\mathbb{Z}\times[1,3])}\to 0\quad\mbox{as }\delta\to 0.
\end{equation}

\section{Invariant curves vs. Lyapunov functions}
We saw that theorem \ref{teo1} will be proved as soon as we could place any initial condition $(q_0,p_0)$ between two invariant curves. In view of (\ref{dove}) it is sufficient to prove the existence of invariant curves for the map (\ref{papprox}) as $\delta\to 0$. More precisely, we are going to prove the existence of a sequence of invariant curves $\Gamma_n$ approaching uniformly the top of the cylinder. Analogously one can prove the existence of a sequence of invariant curves approaching the bottom of the cylinder. Finally we will prove proposition \ref{prop1} to show that the null mean value of the forcing $f$ is essential to have invariant curves.\\
Concerning the boundedness, as anticipated, we have found the variables $(u,v)$ in order to have the Poincaré map written in form (\ref{papprox}) and apply Moser's small twist theorem whose original version is in \cite{moserinvcurve}. There are many versions of this theorem and we shall employ one coming from the works of Herman \cite{herman1,herman2} and explicitly stated in \cite{ortegaadv}. To recall it, let $\mathbb{T}=\mathbb{R}/\mathbb{Z}$ and consider the infinite cylinder $\mathcal{C}=\mathbb{T}\times\mathbb{R}$ and its strip $\mathcal{A}=\mathbb{T}\times [a,b]$ with $b-a\geq\frac{3}{2}$. The theorem deals with maps $g:\mathcal{A}\rightarrow \mathcal{C}$ with lift 
\begin{equation*}
\begin{sistema}
\theta_1=\theta+\omega+\delta[\alpha(r)+R_1(\theta,r)]  \\
r_1=r+\delta R_2(\theta,r)
\end{sistema}
\end{equation*} 
where $\alpha\in C^4[a,b]$, and $R_1,R_2 \in C^4(\mathcal{A})$. The number $\omega\in\mathbb{R}$ is arbitrary and $\delta\in (0,1]$ is a parameter. Suppose that the function $\alpha$ satisfies 
$$
c^{-1}_0\leq \alpha^\prime(r)\leq c_0 \quad \forall r\in[a,b],\quad ||\alpha||_{C^4[a,b]}\leq c_0
$$
for some constant $c_0>1$.
Moreover, we suppose that $g$ satisfies the intersection property, in the sense that 
$$
g(\Gamma)\cap\Gamma\neq\emptyset
$$   
for every non-contractible Jordan curve $\Gamma\subset \mathcal{A}$.
\begin{teo}[\cite{ortegaadv}]\label{moser}
Let $g:\mathcal{A}\rightarrow \mathcal{C}$ be a mapping in the previous conditions. Then there exists $\epsilon>0$, depending on only on $c_0$, such that if
$$
||F||_{C^4(\mathcal{A})}+||G||_{C^4(\mathcal{A})}\leq\epsilon
$$ 
the map $g$ has an invariant curve.
\end{teo}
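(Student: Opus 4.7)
The plan is to follow the classical KAM iteration that underlies Moser's small twist theorem. First I would look for the invariant curve as the graph of a function $r=u(\theta)$ with $u:\mathbb{T}\to[a,b]$, and simultaneously determine a Diophantine rotation number $\rho$ such that the restriction of $g$ to the graph is conjugate to the rigid rotation $\theta\mapsto\theta+\rho$. The invariance condition becomes the functional equation
\[
u\bigl(\theta+\omega+\delta[\alpha(u(\theta))+R_1(\theta,u(\theta))]\bigr)=u(\theta)+\delta R_2(\theta,u(\theta)),
\]
which, after imposing the conjugacy relation, reduces to a single equation for $u$ once $\rho$ has been fixed.

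Next I would set up a Newton-type scheme. Linearizing around a current approximation $u_n$, the leading operator is of the cohomological type $v(\theta+\rho)-v(\theta)=h(\theta)-\langle h\rangle$, up to a factor coming from the twist $\delta\alpha'$. The twist hypothesis $c_0^{-1}\le\alpha'\le c_0$ makes the linearization invertible modulo the one-dimensional obstruction given by the mean value of the right-hand side. This mean-value obstruction is precisely what the intersection property kills: a non-contractible graph cannot be displaced vertically on average by $g$, so the average part of the error is automatically zero and the cohomological equation is solvable. The remaining small-divisor estimates follow from a Diophantine condition $|\rho k-j|\ge \gamma |k|^{-\tau}$ on $\rho$, which can be imposed because the twist condition allows $\rho=\omega+\delta\alpha(r_\ast)$ to sweep an interval of positive measure as $r_\ast$ varies in $[a,b]$ (the hypothesis $b-a\ge 3/2$ is what guarantees enough room for such a selection after accounting for the width of the final invariant curve).

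Then I would run the iteration with Moser smoothing. Since the data are only of class $C^4$, a naive Newton step would lose too many derivatives at each iteration; replacing it by a smoothed Newton step and balancing the quadratic improvement of the error against the smoothing loss yields a convergent scheme whenever $\|R_1\|_{C^4}+\|R_2\|_{C^4}\le\epsilon$ for some $\epsilon$ depending only on $c_0$. The dependence only on $c_0$ follows from the fact that the twist constant alone controls the norm of the inverse of the linearized operator, while the Diophantine constants $\gamma,\tau$ can be fixed universally in terms of the size of the sweep interval for $\rho$.

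The hard part will be the quantitative bookkeeping of the derivative losses: one must show that the number of derivatives used up in inverting the linearized operator (of order $\tau$) together with the smoothing losses can all be absorbed within the four derivatives available, and that the iteration converges uniformly in $\delta\in(0,1]$. A secondary technical point is ensuring that the intersection property survives along the iteration, so that at every step the averaged obstruction really does vanish. Because all of this is carried out carefully in \cite{ortegaadv}, in the paper itself my role reduces to verifying that the map (\ref{papprox}) satisfies the hypotheses of Theorem \ref{moser} and invoking it.
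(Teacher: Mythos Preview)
The paper does not prove Theorem~\ref{moser}; it is quoted from \cite{ortegaadv} and used as a black box. Your final sentence already recognizes this, so there is nothing in the paper to compare your KAM sketch against. The outline you give (graph transform ansatz, cohomological linearization controlled by the twist bound, Diophantine selection of the rotation number, Newton scheme with smoothing to cope with the finite $C^4$ regularity) is a fair description of how results of this type are established, and it correctly identifies that $\epsilon$ depends only on $c_0$ because all operator bounds in the iteration are governed by the twist constant. One minor remark: in the statement as written the perturbations are denoted $F$ and $G$ rather than $R_1$, $R_2$, and the role of the intersection property in the actual proofs (e.g.\ Herman's approach via the translated curve theorem) is somewhat different from the ``averaged obstruction vanishes at every step'' picture you describe, but this does not affect the use made of the theorem here.
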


Now everything is ready for the proof of theorem \ref{teo1}. Excepting for the intersection property, it is easy to see that the Poincaré map expressed in the form (\ref{papprox})-(\ref{resto}) satisfies all the hypothesis of theorem \ref{moser}. In this case, $\theta=u_0$, $r=v_0$, $\omega=T$, $\alpha(v_0)=-\frac{T}{2}v_0^2$ and $\delta$ is small enough. Concerning the intersection property, notice that from a result in \cite{marotopol}, the null mean value of $f$ implies that the Poincaré map associated to system (\ref{rpend1}) is exact symplectic in the sense that the differential form 
$$
p_1dq_1-pdq
$$
is exact in the cylinder. Moreover, it is known that an exact symplectic map has the intersection property. Finally we can say that also map (\ref{papprox}) has the intersection property because this property is preserved by conjugacy. So, an application of theorem \ref{moser} proves theorem \ref{teo1}.  
\smallskip
We have just proved that hypothesis (A1) and (A2) imply that the momentum is bounded. To complete the study of the boundedness we need to prove proposition \ref{prop1}. We will perform the proof supposing that 
$$
\bar{f}=\frac{1}{T}\int_0^Tf(s)ds> 0,
$$ 
the other case being similar.  
We just need to prove that there exists $R$ sufficiently large such that if $|p_0|\geq R$ then the corresponding orbit of the Poincaré map $\Pi$ is unbounded. In this case, a less subtle expansion of $\Pi$, coming directly from system (\ref{rpend1}), will be sufficient. So, integrate (\ref{rpend1}) and get, for $t\in [0,T]$
\begin{equation}\label{hu}
\begin{sistema}
q(t;q_0,p_0)=q_0+t+\tilde{\varepsilon}(t,q_0,p_0) \\  

p(t;q_0,p_0)=p_0+ \int_0^t g(q(s;q_0,p_0))ds + \int_0^t f(s)ds  
\end{sistema}
\end{equation}
where
$$
\tilde{\varepsilon}(t,q_0,p_0)=\int_0^t\left\{ \frac{p(s;q_0,p_0)}{\sqrt{1+p^2(s;q_0,p_0)}}-1 \right\}ds
$$
As $p(t;q_0,p_0)\to\infty$ as $p_0\to\infty$ uniformly in $q_0$ and $t\in [0,T]$, we have $\tilde{\varepsilon}\to 0$ as $p_0\to\infty$, uniformly in $q_0$ and $t\in [0,T]$.\\
Adding and subtracting $\int_0^t g(q_0+s)ds=G(q_0+t)-G(q_0)$ in the second equation of (\ref{hu}) we get
$$
p(t;q_0,p_0)=p_0+ G(q_0+t) - G(q_0) + \int_0^tf(s)ds + \varepsilon(t,q_0,p_0)
$$
where
$$
\varepsilon(t,q_0,p_0)=\int_0^t\{g(q_0+s+\tilde{\varepsilon}(s,q_0,p_0))-g(q_0+s)  \}ds.
$$
The mean value theorem implies that $\varepsilon\to 0$ as $p_0\to\infty$ uniformly in $q_0$ and $t\in [0,T]$. Evaluating in $t=T$ we get the following expansion of $\Pi$: 
\begin{equation*}
\begin{sistema}
q_1=q_0+T+\tilde{\varepsilon}(T,q_0,p_0) \\  

p_1=p_0+ G(q_0+T) - G(q_0) + T\bar{f} + \varepsilon(T,q_0,p_0)  
\end{sistema}
\end{equation*}
where $\varepsilon$ and $\tilde{\varepsilon}$ tends to zero uniformly in $q_0$ as $p_0$ tends to $+\infty$.

Now, inspired by \cite{alonsoortega}, consider the function
$$
V(q,p)=q-G(p).
$$
and notice that
$$
V( \Pi(q,p))=V(q,p)+\Gamma(q,p)
$$
where
$$
\Gamma(q,p)=-G(q+T+\tilde{\varepsilon}(T,q,p))+G(q+T) + \varepsilon(T,q,p) + T\bar{f}.
$$
Now, using the fact that $G$ is bounded, one can find $V_*$ such that if $V(q_0,p_0)\geq V_*$ then $p_0$ is sufficiently large in order to have $\Gamma(q_0,p_0)>\frac{T\bar{f}}{2}$. For such a $p_0$ we have

$$
V( \Pi(q_0,p_0))>V(q_0,p_0)+ \frac{T\bar{f}}{2}>V^*.
$$
So, by induction we can prove that
$$
V(q_n,p_n)>V(q_0,p_0)+ n\frac{T\bar{f}}{2},\quad n\geq 1.
$$
Finally we have that 
$$
\lim_{n\to\infty}V(q_n,p_n)=+\infty
$$
and remembering the definition of $V$ and the boundedness of $G$ we get that $p_n\to +\infty$. 

\section{Generalized quasi-periodic and periodic solutions}
We have just proved that all the solutions of (\ref{rpend1}) have bounded momentum and a natural question is to describe the kind of recurrent motions that can be expected. Periodic solutions of different types always exist (\cite{marotopol},\cite{fondatoader}), and now we are going to look for quasi-periodic solutions. Precisely, we will prove 
\begin{teo}\label{teo3}
For every $\omega\in (-T,T)$, there exists a family of solutions of (\ref{rpend1}), $X_\xi(t)=(q_\xi(t),p_\xi(t))$, with $\xi\in\mathbb{R}$ such that  
\begin{equation}\label{piu}
X_{\xi+1}(t)=X_\xi(t)+(1,0)\quad\mbox{and}\quad X_\xi(t+T)=X_{\xi+\omega}(t).
\end{equation}
Moreover, the initial conditions
$$
\xi\mapsto q_\xi(0)\quad\mbox{and}\quad\xi\mapsto p_\xi(0)
$$
are of bounded variation and 
$$
\lim_{t\to\infty}\frac{q_\xi(t)}{t}=\frac{\omega}{T}.
$$ 
\end{teo}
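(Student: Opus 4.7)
The plan is to produce the family $X_\xi$ as the flow trajectories through an Aubry--Mather set of the Poincar\'e map $\Pi$, using the extension of Aubry--Mather theory to compositions of twist maps from \cite{matherams} in the modified form announced in the introduction. First, I split $[0,T]$ into $N$ subintervals of length $\tau$ and decompose $\Pi=f_N\circ\cdots\circ f_1$, where $f_i$ is the time-$\tau$ flow of (\ref{rpend1}) starting at time $(i-1)\tau$. A direct computation with $H=\sqrt{1+p^2}-G(q)-f(t)q$ gives $\partial^2 H/\partial p^2=(1+p^2)^{-3/2}>0$, so for $\tau$ sufficiently small each $f_i$ is an exact-symplectic twist map of the cylinder. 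The crucial difference with the Newtonian case is that the twist decays like $(1+p^2)^{-3/2}$ as $|p|\to\infty$ instead of growing, which is precisely why the theorem of \cite{matherams} cannot be applied as stated.

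Next, I apply the modified version of Mather's theorem. For every rotation number $\omega\in(-T,T)$ it provides a compact $\Pi$-invariant Mather set carried by an order-preserving parametrization $\xi\mapsto(Q(\xi),P(\xi))$ of bounded variation satisfying
$$
Q(\xi+1)=Q(\xi)+1,\ P(\xi+1)=P(\xi),\ \Pi(Q(\xi),P(\xi))=(Q(\xi+\omega),P(\xi+\omega)).
$$
The natural rotation-number range is $(-T,T)$ because $|\dot q|<1$ forces $|q_1-q_0|<T$ on every orbit, while rotation numbers arbitrarily close to $\pm T$ are attainable since $|\dot q|\to 1$ as $|p|\to\infty$. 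Setting
$$
X_\xi(t)=\bigl(q(t;Q(\xi),P(\xi)),\,p(t;Q(\xi),P(\xi))\bigr),
$$
the first identity in (\ref{piu}) follows from the $1$-periodicity of $g$; the second is the shift property of $\Pi$ on the Mather set, transported to the flow by the $T$-periodicity of $f$ and uniqueness of solutions. The bounded variation of the initial data is built into the construction of the Mather set, and the asymptotics $q_\xi(t)/t\to\omega/T$ follows because $n$ iterates of $\Pi$ displace the $q$-coordinate by approximately $n\omega$, while the flow within a single period contributes only a bounded correction.

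The main obstacle is the modification of the theorem of \cite{matherams}. In that paper the unbounded twist hypothesis is used to obtain coercivity of a Percival-type variational functional and to confine minimizing configurations to a compact region. In the relativistic setting this hypothesis fails because the twist of each $f_i$ is bounded. My idea is to replace coercivity by the a priori bound of Theorem \ref{teo1}: every orbit of $\Pi$ lives in a strip of bounded momentum, and on each such strip the twists of the $f_i$ admit a uniform positive lower bound. This is enough to push Mather's variational argument through and recover all standard conclusions. Once this abstract step is in place, no further difficulty arises and the family $X_\xi$ is built exactly as above.
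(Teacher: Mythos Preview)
Your overall architecture---decompose $\Pi$ into small-time exact-symplectic twist maps, invoke an adapted Aubry--Mather theorem for their composition, and transport the resulting Mather set to the flow---matches the paper. The construction of $X_\xi$ from the parametrization $(\phi,\eta)$ (your $(Q,P)$) and the verification of (\ref{piu}) and of the asymptotic slope $q_\xi(t)/t\to\omega/T$ follow the paper's final section almost verbatim.

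The substantive divergence is in how Mather's theorem is adapted to the finite-twist situation, and here your sketch has a real gap. You propose to keep the $f_i$ untouched and rerun Mather's variational argument, replacing coercivity by the boundedness from Theorem~\ref{teo1}. But in \cite{matherams} the variational principle $h(\theta,\theta_1)$ for a composition is built from the generating functions $h_i$ of the factors, and these are defined on all of $\mathbb{R}^2$ only when each $f_i$ twists the ends infinitely; with bounded twist each $h_i$ lives on a strip $\{\rho_-<\theta_1-\theta<\rho_+\}$, so the infimum defining $h$ is not even globally posed. Saying that ``orbits are confined to a strip'' does not by itself set up the variational problem---and note that Theorem~\ref{teo1} gives a bound depending on the orbit, not a uniform one. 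The paper takes the opposite route: it leaves Mather's proof untouched and instead modifies each $f_i$ outside a large strip, via a surgery on its generating function (Lemma~\ref{modif}), so that the extended $\tilde f_i$ lies in $\mathcal{P}^\infty$ and the theorem of \cite{matherams} applies as a black box to $\tilde F=\tilde f_1\circ\cdots\circ\tilde f_N$. The sequence of invariant curves produced in the \emph{proof} of Theorem~\ref{teo1}---not merely its statement---is then used to trap the resulting Mather orbit in the region where $\tilde F=F$ (Lemmas~\ref{estimerre}, \ref{estimo}, \ref{entra} and Theorem~\ref{comp}). Your instinct to bring in Theorem~\ref{teo1} is the right one, but what is actually exploited is the family of invariant curves going to $\pm\infty$, combined with an extension trick that avoids reopening Mather's argument altogether.
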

To understand why these solutions satisfy a kind of weak quasi-periodicity we define, inspired by \cite{ortegaasy},
$$
\Phi_\xi(\theta_1,\theta_2)=X_{\theta_2-\frac{\omega}{T}\theta_1+\xi}(\theta_1).
$$
It satisfies 
\begin{equation*}
\Phi_\xi(\theta_1+T,\theta_2)=\Phi_\xi(\theta_1,\theta_2),\quad \Phi_\xi(\theta_1,\theta_2+1)=\Phi_\xi(\theta_1,\theta_2)+(1,0) 
\end{equation*}
and this says that the function $\Phi_\xi$ is doubly periodic once it takes values on the phase space $\mathbb{T}\times\mathbb{R}$. The solution is recovered by the formula
$$ 
X_\xi(t)=\Phi_\xi(t,\frac{\omega}{T} t)
$$
when $\Phi_\xi$ is continuous as a function of the three variables $(\xi,\theta_1,\theta_2)$. This function is quasi-periodic. Again we are assuming that it takes values on $\mathbb{T}\times\mathbb{R}$. In the discontinuous case the solution will not be quasi-periodic in the classical sense but the bounded variation of the initial conditions implies that quasi-periodicity in the sense of Mather will appear. See \cite{mathertop} for more details.
When the number $\omega$ is rational, say $\omega=\frac{a}{b}$ with $a$ and $b$ relatively prime, then 
$$
X_\xi(t+bT)=X_\xi(t)+(a,0)
$$
and the solution is periodic with period $bT$. Once more we are assuming that $X_\xi$ takes values on $\mathbb{T}\times\mathbb{R}$. Classically these solutions are called subharmonic solutions of the second kind.\\

To prove theorem \ref{teo3}, consider the change of variable
 \begin{equation*}
 \begin{sistema}
 Q= q \\
 P= p-F(t) 
 \end{sistema}
 \end{equation*}
 where $\dot{F}=f$. System (\ref{rpend1}) transforms into
 \begin{equation}\label{rpend4}
 \begin{sistema}
 \dot{Q}=\frac{P+F(t)}{\sqrt{1+(P+F(t))^2}} \\
 \dot{P}=g(Q) 
 \end{sistema}
 \end{equation}  
The Poincaré map of system (\ref{rpend4}) has a particular form. Consider a partition of the interval $[0,T]$ in $N$ sub intervals of equal length 
\begin{equation}\label{condtwist}
L=\frac{T}{N}<\frac{\pi}{\sqrt{||g^\prime||_\infty}}
\end{equation}
and consider the map $\Pi_{L,\tau}(Q_0,P_0)=(Q(\tau+L;\tau,Q_0,P_0),P(\tau+L;\tau,Q_0,P_0))=(Q_1,P_1)$ where $(Q(t;\tau,Q_0,P_0),P(t;\tau,Q_0,P_0))$ is the solution of (\ref{rpend4}) with initial condition $(Q_0,P_0)$ at time $\tau$. The Poincaré map $\Pi$ of the system can be written as composition of such maps, precisely we have that 
$$
\Pi=\Pi_{T,0}=\Pi_{L,(N-1)L}\circ\dots\circ\Pi_{L,L}\circ\Pi_{L,0}.
$$ 
So let us study such maps. It is worth recalling some definition inspired by \cite{matherams}. Consider a $C^2$ diffeomorphism $f(\theta,r)=(\Theta(\theta,r),R(\theta,r))=(\theta_1,r_1)$ of the infinite cylinder $\mathbb{T}\times \mathbb{R}$ that is isotopic to the identity. Passing to the lift, the components satisfy
$$
\Theta(\theta+1,r)=\Theta(\theta,r)+1,\quad\mbox\quad R(\theta+1,r)=R(\theta,r).
$$
We stress the fact that in his work Mather required only a $C^1$ diffeomorphism, but for our purposes we will need more smoothness. The diffeomorphism is said 
\begin{itemize}
\item exact symplectic if the differential form $Rd\Theta-rd\theta$ is exact in $\mathbb{T}\times\mathbb{R}$,
\item twist if $\partial\Theta/\partial r>0$, while, if there exists $\beta>0$ such that $\partial\Theta/\partial r>\beta$ we will say that $f$ is $\beta$-twist,
\item to preserve the ends of the infinite cylinder, if $R(\theta,r)\to\pm\infty$ as $r\to\pm\infty$ uniformly in $\theta$,  
\item to twist each end infinitely, if $\Theta(\theta,r)-\theta\to\pm\infty$ as $r\to\pm\infty$ uniformly in $\theta$.
\end{itemize}
Now we can recall the
\begin{defin}
Let $\mathcal{P}^\infty=\bigcup_{\beta>0}\mathcal{P}_{\beta}$, where $\mathcal{P}_\beta$ is the class of $C^2$ diffeomorphisms of the infinite cylinder that
\begin{enumerate}
\item are isotopic to the identity 
\item are exact symplectic 
\item are $\beta$-twist
\item preserve the ends of the infinite cylinder, 
\item twist each end infinitely.
\end{enumerate} 
\end{defin}
For our purposes we will need
\begin{defin}
Let $\mathcal{P}^{\rho_+,\rho,_-}$ be the class of $C^2$ diffeomorphisms  of the infinite cylinder that satisfy properties $1.$, $2.$, $4.$ of the previous definition and
\begin{itemize}
\item[3'.] are twist 
\item[5'.] are such that $\Theta(\theta,r)-\theta\to\rho_\pm$ as $r\to\pm\infty$ uniformly in $\theta$,
\item[6.]  there exists $M$ such that $|R(\theta,r)-r|\leq M$ for every $(\theta,r)\in\mathbb{T}\times\mathbb{R}$
\end{itemize}
\end{defin} 
Now we can start the study of the map $\Pi_{L,\tau}$. Notice that by the periodicity of (\ref{rpend4}) it can be seen as a map defined on the cylinder $\mathbb{T}\times\mathbb{R}$. Moreover we have that
\begin{lemma}\label{exact}
For every $\tau\in [0,T]$, the map $\Pi_{L,\tau}$ is exact symplectic in $\mathbb{T}\times\mathbb{R}$
\end{lemma}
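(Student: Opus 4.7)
The plan is to exhibit an explicit primitive of the form $P_1\,dQ_1-P_0\,dQ_0$ on the cylinder, using the classical Hamiltonian action as a generating function. The starting observation is that system (\ref{rpend4}) is itself Hamiltonian: after the symplectic change $Q=q$, $P=p-F(t)$ (generated by the type-2 function $q(P+F(t))$, whose $\partial_t$ cancels the $-f(t)q$ term), the new Hamiltonian is
$$
K(t,Q,P)=\sqrt{1+(P+F(t))^2}-G(Q),
$$
which is $1$-periodic in $Q$ because $G$ is. So $\Pi_{L,\tau}$ is the time-$L$ map, starting at time $\tau$, of a Hamiltonian flow whose Hamiltonian is $1$-periodic in the position variable.

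First I would define, for each $(Q_0,P_0)\in\mathbb{R}^2$, the action
$$
S(Q_0,P_0)=\int_\tau^{\tau+L}\bigl[P(s)\dot Q(s)-K(s,Q(s),P(s))\bigr]\,ds,
$$
where $(Q(s),P(s))$ is the solution of (\ref{rpend4}) with $(Q(\tau),P(\tau))=(Q_0,P_0)$. Then I would carry out the standard variational computation: differentiating under the integral with respect to $(Q_0,P_0)$, substituting Hamilton's equations $\dot Q=K_P$, $\dot P=-K_Q$, and integrating $P\,\delta\dot Q+\dot P\,\delta Q=\tfrac{d}{ds}(P\,\delta Q)$, one obtains the endpoint identity
$$
dS = P_1\,dQ_1-P_0\,dQ_0
$$
as $1$-forms on the lift $\mathbb{R}^2$.

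Next I would check the periodicity of $S$ under $Q_0\mapsto Q_0+1$. Because $K(t,Q,P)$ depends on $Q$ only through $G(Q)$, which has period $1$, translating the initial condition by $(1,0)$ translates the entire orbit by $(1,0)$ without changing $\dot Q$, $P$, or the integrand $P\dot Q-K$. Consequently $S(Q_0+1,P_0)=S(Q_0,P_0)$, so $S$ descends to a single-valued $C^1$ function on $\mathbb{T}\times\mathbb{R}$, and $P_1\,dQ_1-P_0\,dQ_0=dS$ is exact on the cylinder, which is exactly the exact-symplectic property required by the definition.

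I do not expect any serious obstacle. The only points demanding a little care are the variational identity (one must be explicit that the variation of the endpoint $Q(\tau+L)$ under a change of initial data produces exactly the differential $dQ_1$ computed via the flow map, which is just the chain rule) and the dependence on the starting time $\tau$, which enters only through $F(\tau+\cdot)$ and does not affect periodicity in $Q$. Uniformity of the argument in $\tau\in[0,T]$ is therefore automatic.
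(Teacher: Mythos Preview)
Your proposal is correct and is essentially the paper's own proof: the paper defines the primitive
\[
V_\tau(Q_0,P_0)=\int_\tau^{\tau+L}\Bigl\{-\frac{F^2+PF+1}{\sqrt{1+(P+F)^2}}+G(Q)\Bigr\}\,dt,
\]
which is nothing but your action $S=\int(P\dot Q-K)\,dt$ with the integrand expanded (since $P\dot Q-K=\frac{P(P+F)}{\sqrt{1+(P+F)^2}}-\sqrt{1+(P+F)^2}+G(Q)=-\frac{F^2+PF+1}{\sqrt{1+(P+F)^2}}+G(Q)$), and then checks periodicity and the identity $dV_\tau=P_1\,dQ_1-P_0\,dQ_0$ by the same integration-by-parts computation you outline abstractly.
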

\begin{proof}
Inspired by \cite{ortegakunze} consider the function
\begin{equation*}
V_\tau( Q_0, P_0)=\int_\tau^{\tau+L} \left\{ -\frac{F^2(t)+P(t;\tau, Q_0, P_0)F(t)+1}{\sqrt{1+(P(t;\tau Q_0, P_0)+F(t))^2}}+G(Q(t;\tau Q_0, P_0))\right\} dt.
\end{equation*}

First of all, it follows from the periodicity of (\ref{rpend4}) that $Q(t;\tau, Q_0+1,  P_0)=Q(t;\tau, Q_0,  P_0)+1$ and $P(t;\tau, Q_0+1, P_0)=P(t;\tau, Q_0, P_0)$. Hence we have
\begin{equation*}
V_\tau( Q_0+1, P_0)=V_\tau( Q_0, P_0)
\end{equation*}
Now let us compute the differential $dV_\tau$. 

We have
\begin{equation}\label{htf}
\begin{split}
\frac{\partial V_{\tau}}{\partial Q_0}= & \int_\tau^{\tau+L}\left\{\frac{P}{[1+(P+F)^2]^{3/2}} \frac{\partial P}{\partial Q_0}+g(Q)\frac{\partial Q}{\partial Q_0}\right\} dt \\
   &=\int_\tau^{\tau+L}\left\{\frac{P}{[1+(P+F)^2]^{3/2}} \frac{\partial P}{\partial Q_0}+\dot{P}\frac{\partial Q}{\partial Q_0}\right\} dt
\end{split}
\end{equation}
using the second equation in (\ref{rpend4}).
Now, integrating by parts and using the first equation in (\ref{rpend4}) we get
\begin{equation*}
\begin{split}
\int_\tau^{\tau+L}\dot{P} \frac{\partial Q}{\partial Q_0}dt  &= [P\frac{\partial Q}{\partial Q_0}]_\tau^{\tau+L}-\int_\tau^{\tau+L} P \frac{\partial \dot{Q}}{\partial Q_0} dt \\ 
 &= [P\frac{\partial Q}{\partial Q_0}]_\tau^{\tau+L}-\int_\tau^{\tau+L} \frac{P}{[1+(P+F)^2]^{3/2}} \frac{\partial P}{\partial Q_0}
\end{split}
\end{equation*}
that, substituting in (\ref{htf}) gives
$$
\frac{\partial V_{\tau}}{\partial Q_0}=P(\tau+L)\frac{\partial Q}{\partial P_0}(\tau+L)- P(\tau)\frac{\partial Q}{\partial Q_0}(\tau).
$$
Analogously we can get 
$$
\frac{\partial V_{\tau}}{\partial P_0}=P(\tau+L)\frac{\partial Q}{\partial P_0}(\tau+L)- P(\tau)\frac{\partial Q}{\partial P_0}(\tau).
$$  
Hence $dV=P_1dQ_1-P_0dQ_0$ and the lemma is proved.
\end{proof}  
This is not the only property satisfied by the map. In fact we have
\begin{prop}\label{pll}
For every $\tau\in [0,T]$, we have $\Pi_{\tau,L}\in\mathcal{P}^{-L,L}$
\end{prop}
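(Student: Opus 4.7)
The plan is to verify each of the six conditions defining $\mathcal{P}^{-L,L}$, exploiting the fact that $\Pi_{L,\tau}$ is a flow map for the non-autonomous Hamiltonian system (\ref{rpend4}). Condition $1$ holds because the family $\{\Pi_{s,\tau}\}_{s\in[0,L]}$ of time-$s$ maps interpolates continuously from the identity to $\Pi_{L,\tau}$. Condition $2$ is exactly Lemma \ref{exact}. Integrating $\dot P=g(Q)$ over $[\tau,\tau+L]$ gives
$$|P(\tau+L;\tau,Q_0,P_0)-P_0|\leq L\|g\|_\infty=:M,$$
which yields condition $6$ ($|R(\theta,r)-r|\leq M$) and, taking $P_0\to\pm\infty$, condition $4$ ($R\to\pm\infty$ uniformly in $\theta$). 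The same bound forces $P(s)+F(s)\to\pm\infty$ uniformly on $[\tau,\tau+L]$ as $P_0\to\pm\infty$, so $(P+F)/\sqrt{1+(P+F)^2}\to\pm1$ uniformly, and integrating the first equation of (\ref{rpend4}) gives $\Theta(\theta,r)-\theta\to\pm L$ as $r\to\pm\infty$, establishing condition $5'$.

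The one real obstacle is the twist condition $3'$. Setting $u(t)=\partial Q(t;\tau,\cdot)/\partial P_0$ and $v(t)=\partial P(t;\tau,\cdot)/\partial P_0$, the variational equations of (\ref{rpend4}) read
$$\dot u=a(t)\,v,\qquad \dot v=g'(Q)\,u,\qquad u(\tau)=0,\quad v(\tau)=1,$$
where $a(t)=(1+(P(t)+F(t))^2)^{-3/2}\in(0,1]$. Eliminating $v$ produces the Sturm--Liouville form $\frac{d}{dt}(\dot u/a)=g'(Q)\,u$. Assume for contradiction that $u$ has a zero $t^*\in(\tau,\tau+L]$ and take $t^*$ minimal. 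Multiplying by $u$, integrating by parts on $[\tau,t^*]$ (the boundary term vanishes by $u(\tau)=u(t^*)=0$), and using $a\leq 1$ yields
$$\int_\tau^{t^*}\dot u^2\,dt\leq \int_\tau^{t^*}\frac{\dot u^2}{a}\,dt=-\int_\tau^{t^*}g'(Q)\,u^2\,dt\leq \|g'\|_\infty\int_\tau^{t^*}u^2\,dt.$$
Wirtinger's inequality on $[\tau,t^*]$, applicable since $u$ vanishes at both endpoints, gives $\int u^2\leq\frac{(t^*-\tau)^2}{\pi^2}\int\dot u^2$. Combining and canceling the positive integral of $\dot u^2$ forces $t^*-\tau\geq\pi/\sqrt{\|g'\|_\infty}$, contradicting (\ref{condtwist}). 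Hence $u(\tau+L)>0$.

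I expect the Sturm--Liouville step to be the only genuine source of difficulty; the other conditions reduce to manipulating the integral representation of $(Q,P)$ together with Lemma \ref{exact}. The relativistic factor $a$ enters only through the bound $a\leq 1$, so the twist estimate matches the classical Newtonian threshold encoded in (\ref{condtwist}); no improvement over the non-relativistic case is expected, and the strict inequality in (\ref{condtwist}) is essential for closing the contradiction.
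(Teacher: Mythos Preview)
Your proof is correct and follows the same strategy as the paper: conditions $2$, $4$, $5'$, and $6$ are handled identically via Lemma~\ref{exact} and the integral representation of $(Q,P)$, while for the twist condition $3'$ and the isotopy the paper simply invokes \cite{marotopol} under hypothesis~(\ref{condtwist}). Your explicit Sturm--Liouville/Wirtinger argument is precisely the kind of computation that lies behind that citation, so your version is a self-contained rendering of the paper's proof rather than a different approach.
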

\begin{proof}
First of all, from lemma \ref{exact} we have that the map $\Pi_{L,\tau}$ is exact symplectic and by a similar argument as in \cite{marotopol} condition (\ref{condtwist}) implies that for every $\tau\in [0,T]$, the map $\Pi_{L,\tau}$ is twist and isotopic to the identity. From equation (\ref{rpend4}) we have
 \begin{equation*}
 \begin{sistema}
 Q(t;\tau, Q_0,P_0)=Q_0+\int_{\tau}^{t}\frac{P(s;\tau,Q_0,P_0)+F(s)}{\sqrt{1+(P(s;\tau,Q_0,P_0)+F(s))^2}}ds \\
 P(t;\tau, Q_0,P_0)=P_0+\int_{\tau}^{t}g(Q(s;\tau,Q_0,P_0))ds. 
 \end{sistema}
 \end{equation*}
 Evaluating the second equation in $t=\tau+L$, the boundedness of $g$ gives that $\Pi_{\tau,L}$ preserves the end of the infinite cylinder. Moreover, evaluating the first equation in $t=\tau+L$ and using the second we easily get   
$$
\lim_{P_0\to\pm\infty} (Q_1-Q_0)=\pm L
$$
uniformly in $Q_0$. Finally, property $6.$ is a trivial consequence of the boundedness of $g$.
\end{proof}
So, summing up we have that the Poincaré map of system (\ref{rpend4}) can be written as a composition of maps in $\mathcal{P}^{-L,L}$ and this justifies the study that we are going to develop in the next section.

\section{Composition of twist maps and proof of Theorem \ref{teo3}}

Consider a finite family $\{ f_i\}_{i=1,\dots,N}$ such that $f_i\in\mathcal{P}^\infty$ for every $i$. Let $F=f_1\circ\dots \circ f_N$. We have that $F$ is a $C^2$ exact symplectic diffeomorphism of $\mathbb{T}\times\mathbb{R}$ that preserves the ends and such that twists the ends infinitely. However, it has not to be twist.\\
In \cite{matherams}, Mather proved that one can associate to $F$ a continuous function $h(\theta,\theta_1)$, called variational principle, that acts as a generating function for a twist diffeormorphism. The variational principle satisfies, among others, these relevant properties:
\begin{itemize}
\item[(H1)] $h(\theta+1,\theta_1+1)=h(\theta,\theta_1)$,
\item[(H5)] There exists a positive continuous function $\rho$ on $\mathbb{R}^2$ such that
$$
h(\gamma,\theta_1)+h(\theta,\gamma_1)-h(\theta,\theta_1)-h(\gamma,\gamma_1)\geq\int^\gamma_\theta\int^{\gamma_1}_{\theta_1}\rho
$$
if $\theta<\gamma$ and $\theta_1<\gamma_1$,  
\item[(H6$\alpha$)] there exists $\alpha>0$ such that 
\begin{equation*}
\begin{split}
\theta & \to\alpha\theta^2/2-h(\theta,\theta_1) \mbox{ is convex for every } \theta_1  \\
\theta_1 &\to\alpha\theta_1^2/2-h(\theta,\theta_1) \mbox{ is convex for every } \theta.
\end{split}
\end{equation*}
\end{itemize}
The function $h$ in general is not differentiable but from (H6) one can prove that the one side partial derivatives $\partial_1h(\theta\pm,\theta_1)$ and $\partial_2h(\theta,\theta_1\pm)$ exist. Mather proved that there exist particular configurations $(\bar{\theta}_i)$ that minimize an action. They are called minimal configurations and are such that the partial derivatives $\partial_1h(\bar{\theta}_i,\bar{\theta}_{i+1})$ and $\partial_2h(\bar{\theta}_{i-1},\bar{\theta}_i)$ both exist and satisfy
\begin{equation}\label{stat}
\partial_1h(\bar{\theta}_i,\bar{\theta}_{i+1})+\partial_2h(\bar{\theta}_{i-1},\bar{\theta}_i)=0\quad \mbox{for every } i. 
\end{equation}
This property allows to construct a complete orbit $(\bar{\theta}_i,\bar{r}_i)$ of $F$ defining 
$$
\bar{r}_i=-\partial_1h(\bar{\theta}_i,\bar{\theta}_{i+1})=\partial_2h(\bar{\theta}_{i-1},\bar{\theta}_i). 
$$
Once we have a minimal configuration $(\bar{\theta}_i)$, we can define for $(p,q)\in\mathbb{Z}\times\mathbb{Z}$ its translate $T_{p,q}\bar{\theta}$ by $(T_{p,q}\bar{\theta})_i=\bar{\theta}_{i+q}-p$. In an analogous way we can define the translate of an orbit. The translate of a minimal configuration is minimal. Moreover, given two configurations $\Theta=(\theta_i)$ and $\Gamma=(\gamma_i)$ we say that $\Theta<\Gamma$ if $\theta_i<\gamma_i$ for every $i$. Two configurations $\Theta$ and $\Gamma$ are comparable if either $\Theta=\Gamma$ or $\Theta>\Gamma$ or $\Theta<\Gamma$. 
Using these characterizations Mather proved
\begin{teo}[\cite{matherams}]\label{matteo}
Let $F=f_1\circ\dots \circ f_N$ with $f_i\in\mathcal{P}^\infty$ for $i=1,\dots,N$. Then for every $\omega\in\mathbb{R}$ there exists an orbit $(\bar{\theta}_i,\bar{r}_i)$ of $F$ such that any two translates of $(\bar{\theta}_i)$ are comparable and the sequence $(\bar{\theta}_i)$ is increasing. Moreover,  
$$
\lim_{i\to\infty}\frac{\bar{\theta}_i}{i}=\omega
$$ 
and $\omega$ is called rotation number. 
\end{teo}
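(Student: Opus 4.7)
The plan is to follow Mather's variational scheme from~\cite{matherams}, adapted to compositions of twist maps.

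First, I would construct the generating function $h$ for the composition $F$. Since each $f_i\in\mathcal{P}^\infty$ is a monotone twist map, it possesses a classical twist generating function $h_i$, and I would define
$$
h(\theta,\theta_1)=\min_{\xi_1,\dots,\xi_{N-1}}\bigl\{h_1(\theta,\xi_1)+h_2(\xi_1,\xi_2)+\cdots+h_N(\xi_{N-1},\theta_1)\bigr\}.
$$
The axioms defining $\mathcal{P}^\infty$ (isotopic to the identity, exact symplectic, $\beta$-twist, preservation and infinite twisting of the ends) ensure that this infimum is attained and that the properties (H1), (H5), (H6$\alpha$) hold for the resulting $h$.

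Next, for rational $\omega=p/q$, I would minimize the finite action
$$
W(\theta_0,\dots,\theta_{q-1})=\sum_{i=0}^{q-1}h(\theta_i,\theta_{i+1})
$$
over $(p,q)$-periodic configurations $\theta_{i+q}=\theta_i+p$. Coercivity coming from (H6$\alpha$) together with the periodicity (H1) produces a minimizer $(\bar\theta_i)$ satisfying the Euler--Lagrange system~\eqref{stat}. Aubry's fundamental lemma (two minimal configurations cross at most once in the index) then implies that any two translates of $(\bar\theta_i)$ are comparable, and comparability with $T_{0,1}\bar\theta$ in turn forces $(\bar\theta_i)$ to be strictly monotone. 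For irrational $\omega$, I would approximate by rationals $p_n/q_n\to\omega$: the comparability yields a uniform bound $|\bar\theta_i^{(n)}-\bar\theta_0^{(n)}-i\omega|\le C$, so Helly's selection theorem extracts a subsequential limit inheriting all required properties.

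Finally, I would recover the orbit $(\bar\theta_i,\bar r_i)$ of $F$ itself through $\bar r_i=-\partial_1 h(\bar\theta_i,\bar\theta_{i+1})=\partial_2 h(\bar\theta_{i-1},\bar\theta_i)$, where the one-sided derivatives exist by (H6$\alpha$) and agree at minimizers by~\eqref{stat}. Unpacking the infimal convolution in the definition of $h$ recovers the $N-1$ intermediate states under $f_1,\dots,f_{N-1}$ and certifies that $(\bar\theta_i,\bar r_i)$ is a genuine $F$-orbit. The main obstacle I anticipate is the verification of (H5) and (H6$\alpha$) for the composed $h$: one must show that the uniform twist constants $\beta_i>0$ of the individual factors combine into a single uniform $\alpha>0$, and it is precisely here that the infinite-twist hypothesis at the ends is indispensable, since without it the inner minimum defining $h$ could escape to infinity and destroy both convexity and coercivity.
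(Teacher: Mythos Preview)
The paper does not prove this theorem at all: it is stated with the attribution \cite{matherams} and used as a black box. The preceding paragraphs only recall the variational principle $h$ together with properties (H1), (H5), (H6$\alpha$), and the discussion of minimal configurations and the relation~\eqref{stat}, all of which are explicitly credited to Mather. There is therefore no in-paper proof to compare your proposal against.

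Your sketch is a faithful outline of the argument in \cite{matherams}: the infimal-convolution construction of $h$ from the generating functions $h_i$, the verification of (H1), (H5), (H6$\alpha$), the minimization over $(p,q)$-periodic configurations for rational $\omega$, Aubry's crossing lemma to obtain comparability of translates, and the passage to irrational $\omega$ by approximation. Two small points worth tightening if you ever write this out in full: first, since $F=f_1\circ\cdots\circ f_N$ applies $f_N$ first, the chain in the definition of $h$ should read $h_N(\theta,\xi_1)+\cdots+h_1(\xi_{N-1},\theta_1)$ rather than the order you wrote; second, your closing remark that the infinite-twist hypothesis is what prevents the inner minimum from escaping is exactly right, and is in fact the reason the present paper must later replace Theorem~\ref{matteo} by the modified Theorem~\ref{comp} for maps in $\mathcal{P}^{\rho_+,\rho_-}$.
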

The connection between theorem \ref{matteo} and the result in the first paper by Mather \cite{mathertop} is stated in the following 
\begin{cor}\label{relmat}
From the orbit $(\bar{\theta}_i,\bar{r}_i)$ in the previous theorem, we can construct two functions $\phi ,\eta:\mathbb{R}\rightarrow\mathbb{R}$ satisfying, for every $t\in\mathbb{R}$
\begin{equation*}
\phi(t+1)=\phi(t)+1,\quad \eta(t+1)=\eta(t)
\end{equation*}
\begin{equation}\label{roto}
F(\phi(t),\eta(t))=(\phi(t+\omega),\eta(t+\omega))
\end{equation}
where $\phi$ is monotone (strictly if $\omega\notin\mathbb{Q}$) and $\eta$ is of bounded variation.
\end{cor}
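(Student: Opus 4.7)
The plan is to define $\phi$ and $\eta$ on the countable set $A=\{q\omega-p: p,q\in\mathbb{Z}\}\subset\mathbb{R}$ by
$$\phi(q\omega-p)=\bar{\theta}_q-p,\qquad \eta(q\omega-p)=\bar{r}_q,$$
and then extend to all of $\mathbb{R}$ by one-sided limits. First I would check that the formulas are well-posed. When $\omega$ is irrational the representation $t=q\omega-p$ is unique. When $\omega=a/b$ with $\gcd(a,b)=1$, two representations differ by an integer multiple of $(a,b)$, and consistency reduces to $T_{a,b}\bar{\theta}=\bar{\theta}$ (which also gives $\bar{r}_{q+b}=\bar{r}_q$ by $F^b$-equivariance). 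This can be arranged by taking the orbit of Theorem \ref{matteo} to be the minimal periodic orbit in the rational case, whose existence is standard in Mather's framework.

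Next I would verify monotonicity of $\phi$ on $A$. Suppose $q\omega-p<q'\omega-p'$. The translates $T_{p,q}\bar{\theta}$ and $T_{p',q'}\bar{\theta}$ are comparable, and since $\bar{\theta}_i/i\to\omega$, the $i$-th component of their difference $\bar{\theta}_{i+q'}-\bar{\theta}_{i+q}-(p'-p)$ tends to $(q'-q)\omega-(p'-p)>0$ as $i\to\infty$; comparability therefore forces $T_{p,q}\bar{\theta}<T_{p',q'}\bar{\theta}$ componentwise, and evaluation at $i=0$ yields $\phi(q\omega-p)\leq\phi(q'\omega-p')$. When $\omega\notin\mathbb{Q}$, equality at $i=0$ would force $T_{p,q}\bar{\theta}=T_{p',q'}\bar{\theta}$ by the non-crossing property of minimal configurations, hence $(q'-q)\omega=p'-p$, contradicting irrationality; this gives strict monotonicity.

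I would then extend by setting $\phi(t)=\sup\{\phi(s):s\in A,\ s<t\}$, and defining $\eta(t)$ by the analogous left-hand limit along $A$. The periodicity $\phi(t+1)=\phi(t)+1$ and $\eta(t+1)=\eta(t)$ is immediate on $A$ from the shift $p\mapsto p-1$ (the $p$-translation appears as $+1$ in $\phi$ but is absent from $\eta$), and it passes to the extension. The rotation equation (\ref{roto}) holds on $A$ because the $(1,0)$-equivariance of $F$ together with $F(\bar{\theta}_q,\bar{r}_q)=(\bar{\theta}_{q+1},\bar{r}_{q+1})$ gives $F(\bar{\theta}_q-p,\bar{r}_q)=(\bar{\theta}_{q+1}-p,\bar{r}_{q+1})$, i.e.\ $F(\phi(t),\eta(t))=(\phi(t+\omega),\eta(t+\omega))$ for $t\in A$; since $A+\omega=A$, left-continuity of $\phi,\eta$ and continuity of $F$ propagate the identity to all $t\in\mathbb{R}$.

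The main obstacle I anticipate is the bounded variation of $\eta$. The route I would take uses the variational principle $h$ for $F=f_1\circ\dots\circ f_N$: by the $\alpha$-convexity statement (H6$\alpha$), the one-sided derivatives $\bar{r}_q=-\partial_1 h(\bar{\theta}_q,\bar{\theta}_{q+1})$ depend monotonically and with Lipschitz-type control on monotone variations of the neighbouring $\bar{\theta}$'s, while property 6.\ of $\mathcal{P}^{-L,L}$, inherited through the composition, keeps the $\bar{r}_q$ in a bounded strip. Combining these, the sum of jumps of $\eta$ along $A\cap[t,t+1]$ is bounded independently of the partition, and the left-continuous extension lies in $BV_{\mathrm{loc}}(\mathbb{R})$.
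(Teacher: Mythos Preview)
Your approach is essentially the paper's: define $\phi,\eta$ on the set $\Sigma=\{j\omega-k\}$ via $\phi(j\omega-k)=\bar\theta_j-k$, $\eta(j\omega-k)=\bar r_j$, prove $\phi$ monotone from comparability of translates and the rotation number, extend by left limits, and check \eqref{roto} first on $\Sigma$ and then by continuity. The monotonicity argument and the treatment of the rational case match the paper's.

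The only place your sketch is shakier than the paper is the bounded-variation step. The paper does \emph{not} use boundedness of the $\bar r_q$ (your appeal to property~6.\ of $\mathcal{P}^{-L,L}$ is out of context here: Corollary~\ref{relmat} is stated for compositions in $\mathcal{P}^\infty$, and boundedness alone would not give BV anyway). What the paper proves is the precise two-sided Lipschitz estimate
\[
|\eta(s)-\eta(t)|\le \alpha\,\bigl(\phi(s)-\phi(t)\bigr)\qquad (t<s,\ t,s\in\Sigma),
\]
and then BV of $\eta$ follows immediately from monotonicity (hence BV) of $\phi$. To get this estimate one needs \emph{both} (H6$\alpha$) and (H5): (H6$\alpha$) controls the change of $\partial_2 h(\theta,\cdot)$ in the second slot by $\alpha$, while (H5) is what lets you compare $\partial_2 h$ at different values of the \emph{first} slot (it gives $\partial_2 h(\phi(s-\omega),\phi(t)-)\le \partial_2 h(\phi(t-\omega),\phi(t))$). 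The companion inequality comes from the dual representation $\eta(t)=-\partial_1 h(\phi(t),\phi(t+\omega))$ via \eqref{stat}. So your intuition that (H6$\alpha$) is the key is right, but (H5) is the missing ingredient that makes the ``Lipschitz-type control on monotone variations of the neighbouring $\bar\theta$'s'' rigorous.
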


\begin{proof}
Inspired by \cite{moserbook}, let us consider, for every $\omega$, the set
\begin{equation}\label{sig}
 \Sigma=\{t\in\mathbb{R}:\: t=j\omega-k \mbox{ for some } (j,k)\in\mathbb{Z}^2\}.
 \end{equation}
  We have to distinguish whether $\omega$ is rational or not.\\   
  $-$ If $\omega$ is irrational, $\Sigma$ is a dense additive subgroup of $\mathbb{R}$ and every pair $(j,k)$ gives rise to a different number. We proceed by steps.\\
  \textit{STEP 1: definition of $\phi$ on $\Sigma$}. 
  If $t\in\Sigma$ we define
 \begin{equation}\label{deffi}
 \phi(t)=\bar{\theta}_j-k.
 \end{equation}  
We claim that the function $\phi:\Sigma\rightarrow\mathbb{R}$ is strictly increasing: we have to prove that 
$$
j\omega-k<j^\prime\omega-k^\prime\Rightarrow\bar{\theta}_j-k<\bar{\theta}_{j^\prime}-k^\prime
$$
that is, calling $r=j^\prime-j$ and $s=k^\prime-k$,
$$
0<r\omega-s\Rightarrow\bar{\theta}_j<\bar{\theta}_{j+r}-s.
$$
The case $r=0$ is obvious, so suppose $r\neq 0$. Suppose by contradiction that for some $j\in\mathbb{Z}$  
\begin{equation}\label{ind}
\bar{\theta}_j\geq\bar{\theta}_{j+r}-s
\end{equation} 
we have, from the comparison property of the translated, that either 
$$
\bar{\theta}_i>\bar{\theta}_{i+r}-s\quad \mbox{ for every }i.
$$
or
$$
\bar{\theta}_i=\bar{\theta}_{i+r}-s\quad \mbox{ for every }i.
$$
In the second case the orbit would be periodic and this is not compatible with an irrational rotation number.
So, from (\ref{ind}) we can prove by induction that for every $n\in\mathbb{N}$
$$
\bar{\theta}_j>\bar{\theta}_{j+nr}-ns.
$$
Now suppose that $r>0$. Taking the limit for $n\to\infty$ after having divided by $nr$ we get
$$
0\geq\omega-\frac{s}{r}.
$$
that leads to a contradiction as we multiply by $r$. Notice that we can repeat the same argument and get the same contradiction for $r<0$.\\
Moreover, $\phi$ satisfies the periodicity property
$$
\phi(t+1)=\phi(t)+1 \quad \mbox{for each } t\in\Sigma.
$$
\textit{STEP 2: extension of $\phi$ outside $\Sigma$}.
Given $\tau\in\mathbb{R}-\Sigma$, the limits
$$
\phi(\tau\pm)=\lim_{t\to\tau^\pm, t\in\Sigma}\phi(t)
$$
exist and $\phi(\tau -)\leq\phi(\tau +)$. To extend $\phi$ to a monotone function on the whole real line it is sufficient to impose $\phi(\tau)\in[\phi(\tau -),\phi(\tau +)]$ and we choose $\phi(\tau)=\phi(\tau -)$. In this way $\phi:\mathbb{R}\rightarrow\mathbb{R}$ is strictly increasing and satisfies
$$
\phi(t+1)=\phi(t)+1 \quad \mbox{for each } t\in\mathbb{R}.
$$
\textit{STEP 3: definition of $\eta$ on $\Sigma$}. 
Define, for $t\in\Sigma$
\begin{equation}\label{et}
\eta(t)=\partial_2 h(\phi(t-\omega),\phi(t))
\end{equation}
where $h$ is the variational principle associated to $F$. We claim that for every $t,s\in \Sigma$
\begin{equation}\label{lips}
|\eta(s)-\eta(t)|\leq\alpha(\phi(s)-\phi(t))
\end{equation}
where $\alpha$ comes from (H6$\alpha$). Supposing $t<s$ we have from the monotonicity
$$
\phi(t-\omega)<\phi(s-\omega),\quad \phi(t)<\phi(s),\quad \phi(t+\omega)<\phi(s+\omega).
$$
Inspired by \cite[Proposition 2.6]{matherams}, we notice that if in (H5) we set $\gamma=\phi(s-\omega)$, $\theta=\phi(t-\omega)$, $\theta_1=\phi(t)-\epsilon$, $\gamma_1=\phi(t)$ with $\epsilon>0$, divide by $\epsilon$ and let $\epsilon\to 0$ we get
$$
\partial_2h(\phi(s-\omega),\phi(t)-)\leq\partial_2h(\phi(t-\omega),\phi(t)).
$$ 
remembering that the partial derivatives exist on the orbits. Moreover, from (H6$\alpha$) and remembering that the one side partial derivatives of a convex function exist and are non decreasing, we have
$$
\partial_2h(\phi(s-\omega),\phi(s))\leq\partial_2h(\phi(s-\omega),\phi(t)-)+\alpha(\phi(s)-\phi(t)).
$$
Combining these two inequalities we have
$$
\eta(s)\leq \eta(t)+\alpha(\phi(s)-\phi(t)).
$$
Using using (\ref{stat}) we can see that also $\eta(t)=-\partial_1 h(\phi(t),\phi(t+\omega))$ so we can get analogously
$$
\eta(t)\leq \eta(s)+\alpha(\phi(s)-\phi(t))
$$
and conclude.\\
\textit{STEP 4: extension of $\eta$ outside $\Sigma$}. 
If $\tau\notin\Sigma$ we define
\begin{equation}\label{defeta}
\eta(\tau)=\lim_{t\uparrow\tau,t\in\Sigma}\eta(t)
\end{equation}
This is a correct definition. Indeed, from (\ref{lips}) we have that
$$
|\eta(t_{n+k})-\eta(t_n)|\leq\alpha|\phi(t_{n+k})-\phi(t_n)|,
$$ 
and, being $\phi(t_n)$ a Cauchy sequence, we have that $\eta(t_n)$ converges and the limit (\ref{defeta}) exists. In principle the limit could depend on the sequence. This is not the case, indeed in case that $\eta(t^1_n)\to l_1$ and $\eta(t^2_n)\to l_2$ we can construct a new increasing sequence $(\tau_n)$ having $(t^1_n)$ and $(t^2_n)$ as sub-sequences. So also $\eta(\tau_n)$ has to converge to a limit that is the same as $l_1$ and $l_2$. So the definition (\ref{defeta}) makes sense. With this definition we have that estimate (\ref{lips}) holds for every $t,s\in\mathbb{R}$. Since $\phi$ is monotone and hence of bounded variation, we have that $\eta$ is of bounded variation.\\
Now, from the periodicity property of $h$ and $\phi$ we get that $\eta(t+1)=\eta(t)$.\\ 
\textit{STEP 5: property (\ref{roto}) holds}.
Let us assume first that $t\in\Sigma$. Then $t=j\omega-k$ and
$$
\phi(t)=\bar{\theta}_j-k, \quad \phi(t+\omega)=\bar{\theta}_{j+1}-k.
$$
Moreover,
$$
\eta(t)=\partial_2 h(\phi(t-\omega),\phi(t))=\partial_2 h(\bar{\theta}_{j-1},\bar{\theta}_j)=\bar{r}_j
$$
and similarly $\eta(t+\omega)=\bar{r}_{j+1}$. Since $(\bar{\theta}_j-k,\bar{r}_j)$ is an orbit of $F$ we conclude that
$$
F(\phi(t),\eta(t))=(\phi(t+\omega),\eta(t+\omega)).
$$  
Let us assume now that $t\in\mathbb{R}\setminus\Sigma$. So we select a sequence $(t_n)$ converging to $t$ with $t_n\in\Sigma$ and $t_n<t$. Then we can pass to the limit in the identity
$$
F(\phi(t_n),\eta(t_n))=(\phi(t_n+\omega),\eta(t_n+\omega)).
$$
The irrational case is done.\\
$-$ The case $\omega=\frac{p}{q}$ rational is simpler. We can suppose that $p$ and $q$ are relative primes and that the corresponding sequence $(\bar{\theta}_i)$ is periodic (in the sense that $\bar{\theta}_{i+q}=\bar{\theta}_i+p$). First of all notice that in this case, the subgroup $\Sigma$ defined in (\ref{sig}) is discrete, precisely, 
$$
\Sigma=\{\frac{d}{q}: \: d\in\mathbb{Z}\}.
$$
The representation $t=j\omega-k$ is not unique, indeed $t=j\frac{p}{q}-k=j^\prime\frac{p}{q}-k^\prime$ whenever $k^\prime-k=Np$ and $j^\prime-j=Nq$ for some $N\in\mathbb{N}$. Anyway the periodicity of $(\bar{\theta}_i)$ implies that
$$
j\frac{p}{q}-k=j^\prime\frac{p}{q}-k^\prime\Rightarrow \bar{\theta}_j-k=\bar{\theta}_{j^\prime}-k^\prime.
$$
So we can define $\phi$ on $\Sigma$ as in (\ref{deffi}). As before one can prove that $\phi:\Sigma\rightarrow\mathbb{R}$ is increasing (non strictly). We extend it to a monotone function on the whole $\mathbb{R}$ as a piecewise constant function that is continuous from the left and taking only the values $\bar{\theta}_j-k$.  
 
Finally, as before, one can prove that $\phi(t+1)=\phi(t)+1$. Moreover, the fact that $\phi$ takes only values at points of a minimal orbit, we can define directly for $t\in\mathbb{R}$ 
\begin{equation*}
\eta(t)=\partial_2 h(\phi(t-\omega),\phi(t)).
\end{equation*}
This function is of bounded variation and condition (\ref{roto}) is satisfied as well. To prove this we just have to repeat the same arguments as in the irrational case. Note that this time it is not necessary to pass to the limit.   
\end{proof}

In our case, theorem \ref{matteo} cannot be applied, as the hypothesis of the infinite twist at infinity is not satisfied. So we will present a modified version of the theorem. First we give the following notation: let $\Gamma_k$ be a sequence of non-contractible Jordan curves that are invariant under a map $f$. This curves are called invariant curves. We say that $\Gamma_k \uparrow +\infty$ uniformly if there exists a sequence $r_k\to +\infty$ as $k\to +\infty$ such that $\Gamma_k\subset\mathbb{T}\times (r_k,+\infty)$. The reader can easily guess the meaning of $\Gamma_k \downarrow -\infty$ uniformly. 


We can prove
\begin{teo}\label{comp}
Consider a finite family $\{ f_i\}_{i=1,\dots,N}$ where $f_i\in\mathcal{P}^{\rho_+,\rho_-}$. Let $F=f_1\circ\dots\circ f_N$. Suppose that $F$ possesses a sequence $(\Gamma_k)$ of invariant curves such that $\Gamma_k \uparrow +\infty$ uniformly as $k\to +\infty$ and $\Gamma_k \downarrow -\infty$ uniformly as $k\to -\infty$   
 Then, for every $\omega\in (N\rho_-,N\rho_+)$ there exist two functions $\phi ,\eta:\mathbb{R}\rightarrow\mathbb{R}$ satisfying the same properties as in Corollary \ref{relmat}.   
\end{teo}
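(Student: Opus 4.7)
The plan is to reduce Theorem \ref{comp} to Theorem \ref{matteo} by modifying each $f_i$ outside a large compact strip into a map of $\mathcal{P}^\infty$, and then to use the invariant curves of $F$ as barriers that confine the resulting Mather set to the region where the modification is trivial. Fix $\omega\in(N\rho_-,N\rho_+)$. Since each $f_i$ is twist and $\Theta_i(\theta,r)-\theta$ converges uniformly to $\rho_\pm$ as $r\to\pm\infty$, this quantity lies in $(\rho_-,\rho_+)$ for every finite $r$, so the induced rotation number of $F$ on every invariant curve $\Gamma_k$ lies in $(N\rho_-,N\rho_+)$, and the uniform convergence forces it to tend to $N\rho_\pm$ as $k\to\pm\infty$. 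I can therefore choose $k_-<k_+$ such that the rotation numbers $\omega_\pm$ of $F$ on $\Gamma_{k_\pm}$ satisfy $\omega_-<\omega<\omega_+$; let $\mathcal{R}$ denote the closed annulus bounded by these two curves.

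Since $\mathcal{R}$ is compact and each $f_i$ is continuous, the partial images $f_j\circ\cdots\circ f_1(\mathcal{R})$, $j=0,\dots,N-1$, lie in some strip $\tilde{\mathcal{R}}=\mathbb{T}\times[-M,M]$. For each $i$ I construct a $C^2$ exact symplectic modification $\tilde f_i\in\mathcal{P}^\infty$ coinciding with $f_i$ on $\tilde{\mathcal{R}}$. This can be done through the generating function $h_i$ of $f_i$, smoothly gluing outside a suitable strip a term whose mixed partial derivative forces $\Theta-\theta\to\pm\infty$ at the two ends of the cylinder, while preserving exact symplecticness automatically. Then $\tilde F=\tilde f_1\circ\cdots\circ\tilde f_N$ satisfies the hypotheses of Theorem \ref{matteo}, which gives, for the chosen $\omega$, a minimal orbit $(\bar\theta_j,\bar r_j)$ of $\tilde F$ with the comparable translates property and rotation number $\omega$.

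I next confine this orbit to $\mathcal{R}$. The curves $\Gamma_{k_\pm}$ are invariant also for $\tilde F$, with the same rotation numbers $\omega_\pm$, since they together with their intermediate images lie in the region where $\tilde F=F$. Non-contractible invariant Jordan curves of an exact symplectic diffeomorphism cannot be crossed by orbits, so the minimal orbit lies entirely on one side of each $\Gamma_{k_\pm}$; a standard Aubry--Mather comparison argument using the ordering of minimizers and the fact that $\omega_\pm$ strictly separate $\omega$ then forces the orbit to lie between $\Gamma_{k_-}$ and $\Gamma_{k_+}$, that is in $\mathcal{R}$. On $\mathcal{R}$ the modification is trivial, so this is in fact an orbit of $F$ with rotation number $\omega$, and applying the construction in the proof of Corollary \ref{relmat} verbatim yields the desired functions $\phi$ and $\eta$.

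The main obstacle is the construction of the modifications $\tilde f_i\in\mathcal{P}^\infty$: one must produce a $C^2$, globally exact symplectic extension of $f_i$ with the required infinite twist at both ends and with a smooth matching of the generating function across the gluing region. A secondary delicate point is the confinement step showing that the Mather orbit of $\tilde F$ with rotation number $\omega$ actually lies in $\mathcal{R}$: this requires exploiting carefully the comparability of translates together with the invariance and rotation numbers of $\Gamma_{k_\pm}$, and is the real content of the modification of Mather's theorem at the heart of Theorem \ref{comp}.
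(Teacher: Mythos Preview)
Your overall architecture matches the paper's: modify each $f_i$ outside a large strip into a map of $\mathcal{P}^\infty$, apply Theorem~\ref{matteo} to $\tilde F$, then confine the resulting Mather orbit to the unmodified region using the invariant curves, and finish via Corollary~\ref{relmat}. The modification step you sketch is exactly Lemma~\ref{modif}. The substantive divergence is in the confinement step.

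The paper does \emph{not} use the rotation numbers of the invariant curves at all. Instead it proves a uniform bound $|\tilde R(\theta,r)-r|\le K$ for the modified maps, valid independently of the support of the modification (Lemma~\ref{estimerre}, via d'Alembert's formula for the generating function). This feeds into a pointwise estimate (Lemma~\ref{estimo}): there is $r_*$ such that $\tilde\Theta(\theta,r)-\theta\ge\omega+\eta$ for every $r>r_*$, and symmetrically below. Confinement (Lemma~\ref{entra}) is then immediate: any orbit with $\liminf\bar\theta_n/n<\omega+\eta$ and $\limsup\bar\theta_n/n>\omega-\eta$ must visit the strip $|r|<r_*$, and the invariant curves chosen outside this strip trap it there. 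Your route, by contrast, selects $\Gamma_{k_\pm}$ with rotation numbers $\omega_-<\omega<\omega_+$ and then appeals to a ``standard Aubry--Mather comparison'' to force the Mather orbit between them. That step is the gap: for a composition of twist maps (which is itself \emph{not} twist) there is no automatic monotonicity of rotation number in $r$, and the ordering of minimizers you invoke requires knowing that orbits on $\Gamma_{k_\pm}$ are action-minimizing for the variational principle of $\tilde F$ and that the curves are graphs so that ``below'' in the configuration sense means ``below'' in the cylinder. Neither fact is among the hypotheses of Theorem~\ref{comp}, and neither is proved in the paper (nor is it a one-line consequence of \cite{matherams}). The paper's pointwise estimate sidesteps these issues entirely; if you want to keep your rotation-number approach you would need to supply those two ingredients, which is considerably more than the ``standard'' label suggests.
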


The proof of this theorem relies on the following lemmas

\begin{lemma}\label{modif}
Consider $f\in\mathcal{P}^{\rho_+,\rho,_-}$. Fix an interval $[a,b]$. Then, there exists $\tilde{f}\in\mathcal{P}^\infty$ such that $f=\tilde{f}$ on $\mathbb{T}\times [a,b]$.
\end{lemma}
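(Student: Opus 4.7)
My plan is to construct $\tilde f$ through its generating function, smoothly replacing the asymptotic behaviour of $h$ at the ends of the strip $D$ by a quadratic generator that twists infinitely. Since $f\in\mathcal{P}^{\rho_+,\rho_-}$ is $C^2$, exact symplectic and twist, there exists a $C^2$ generating function $h(\theta,\theta_1)$ on the open strip $D=\{\rho_-<\theta_1-\theta<\rho_+\}$, periodic under $(\theta,\theta_1)\mapsto(\theta+1,\theta_1+1)$, with the recovery relations $r=-\partial_1 h$, $r_1=\partial_2 h$ and the twist inequality $\partial_1\partial_2 h<0$. By continuity of $\Theta$ and property $5'$, I can choose constants $\mu_-^*<\mu_-<\mu_+<\mu_+^*$ inside $(\rho_-,\rho_+)$ such that $\mu_-<\Theta(\theta,r)-\theta<\mu_+$ for every $(\theta,r)\in\mathbb{T}\times[a,b]$, and by compactness modulo the diagonal $\mathbb{Z}$-action there exists $\beta_0>0$ with $\partial_1\partial_2 h\le-\beta_0$ on the larger strip $S^*=\{\mu_-^*\le\theta_1-\theta\le\mu_+^*\}$.

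Next I would construct a $C^2$ function $\tilde h:\mathbb{R}^2\to\mathbb{R}$ that equals $h$ on $S=\{\mu_-\le\theta_1-\theta\le\mu_+\}$, is invariant under the diagonal $\mathbb{Z}$-action, and equals a quadratic model $\tfrac{\beta_1}{2}(\theta_1-\theta)^2+v_\pm(\theta,\theta_1)$ outside a thin layer just beyond $S^*$, where $\beta_1\in(0,\beta_0]$ and each $v_\pm$ is a bounded diagonally periodic $C^2$ correction chosen so that the quadratic model matches $h$ in value and first-order derivatives along the boundary lines $\theta_1-\theta=\mu_\pm^*$. A smooth cutoff $\chi(\theta_1-\theta)$, equal to $1$ on $[\mu_-,\mu_+]$ and vanishing outside $[\mu_-^*,\mu_+^*]$, glues the two pieces to a globally $C^2$ function.

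The main obstacle is to keep the twist inequality $\partial_1\partial_2\tilde h\le-\beta$ alive through the transition layers. Writing $\tilde h$ as the convex combination of $h$ and the quadratic model, the bulk term in $\partial_1\partial_2\tilde h$ equals $-\chi\beta_0-(1-\chi)\beta_1$, while the cross terms carry derivatives of $\chi$ multiplied by the mismatch between $h$ and the model, a mismatch that vanishes to first order on $\partial S^*$. Using the uniform bounds on $\partial^2 h$ on $S^*$ available by compactness, and tuning $\beta_1$ together with the width of the transition layer, the cross terms can be made subdominant to the bulk term, so the twist inequality is preserved throughout $\mathbb{R}^2$. This is the only quantitative estimate required in the proof.

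With $\tilde h$ in hand, the implicit function theorem produces a $C^2$ diffeomorphism $\tilde f$ of $\mathbb{T}\times\mathbb{R}$ via $r=-\partial_1 \tilde h$, $r_1=\partial_2 \tilde h$; it is automatically exact symplectic and $\beta$-twist. On $\mathbb{T}\times[a,b]$ one has $\theta_1-\theta\in[\mu_-,\mu_+]$, whence $\tilde h=h$ and $\tilde f=f$. For $|\theta_1-\theta|$ large the quadratic tails give $r=\beta_1(\theta_1-\theta)+O(1)$, so $\Theta-\theta\to\pm\infty$ as $r\to\pm\infty$ while $R-r$ remains bounded: $\tilde f$ preserves the ends and twists each end infinitely. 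Isotopy to the identity follows from the homotopy $\tilde h_s=(1-s)\tfrac{\beta_1}{2}(\theta_1-\theta)^2+s\tilde h$, whose twist sign is preserved throughout. Hence $\tilde f\in\mathcal{P}_\beta\subset\mathcal{P}^\infty$, as required.
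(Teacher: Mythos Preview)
Your strategy of working with the generating function and extending it beyond the strip $\{\rho_-<\theta_1-\theta<\rho_+\}$ is exactly the paper's, but your execution differs at the key step and leaves a genuine gap.

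You glue at the level of $h$ itself: $\tilde h=\chi h+(1-\chi)q$ with $q$ the quadratic model. Computing $\partial_{12}\tilde h$ then produces, besides the ``bulk'' $\chi\,\partial_{12}h+(1-\chi)\,\partial_{12}q$, cross terms of the form $-\chi''(h-q)-2\chi'\,\partial_s(h-q)$ (with $s=\theta_1-\theta$). You claim these can be made subdominant by tuning $\beta_1$ and the layer width $w$. But matching $h$ and $q$ only to first order along $s=\mu_\pm^*$ gives $|h-q|\le C\,(s-\mu_\pm^*)^2$ and $|\partial_s(h-q)|\le C\,|s-\mu_\pm^*|$ in the layer, with $C$ controlled by the \emph{second} derivatives of $h-q$; since $|\chi''|\sim w^{-2}$ and $|\chi'|\sim w^{-1}$, the products are of order $C$, independent of $w$. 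Thus the cross terms are $O(1)$, of the same order as the bulk, and neither shrinking $w$ nor adjusting $\beta_1$ cures this (enlarging $\beta_1$ also enlarges $C$). Moreover, your correction $v_\pm$, once forced to match the full gradient of $h$ along a line, will in general have $\partial_{12}v_\pm\neq0$, so the bulk term is not simply $-\chi\beta_0-(1-\chi)\beta_1$ either. In short, the inequality $\partial_{12}\tilde h<0$ is not established.

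The paper avoids this by modifying one level higher: it cuts off $\partial_{12}h$ rather than $h$, setting $\Delta=\chi\,\partial_{12}h+(\chi-1)\delta$, where $\delta>0$ is a lower bound for $-\partial_{12}h$ on a slightly larger strip. Since the transition zone of $\chi$ stays inside that strip, one checks directly that $\Delta\le-\delta$ everywhere, with no cross terms at all. The modified generating function $\tilde h$ is then recovered by solving the characteristic Cauchy problem $\partial_{12}u=\Delta$ with data taken from $h$ along a line $\theta_1-\theta=\mathrm{const}$; this yields a $C^2$ (indeed $C^3$) function with $\partial_{12}\tilde h=\Delta\le-\delta$ globally and $\tilde h=h$ on the inner strip. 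The Legendre condition, and hence $\beta$-twist, is then immediate. This wave-equation integration also gives the explicit d'Alembert representation used in the very next lemma to bound $|\tilde R-r|$ uniformly in the support $[a,b]$, a quantitative control that your convex-combination construction would not readily provide.
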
   
\begin{proof}
It is convenient to work with the generating function $h(\theta,\theta_1)$. Remember that it is a $C^3$ function defined on the set $\tilde{\Sigma}=\{ \rho_-<\theta_1-\theta<\rho_+ \}\subset\mathbb{R}^2$ such that $h(\theta+1,\theta_1+1)=h(\theta,\theta_1)$ and satisfies the Legendre condition $\partial_{12}h<0$. It generates $f$ in the sense that the map $f$ is defined implicitly by the equations
\begin{equation}\label{gert}
\begin{sistema}
\partial_1 h(\theta,\theta_1)= -r  \\
\partial_2 h(\theta,\theta_1)= r_1.
\end{sistema}
\end{equation}
More details can be found in \cite{ortegakunze}. 
Notice that the strip $\mathbb{T}\times[a,b]$ of the cylinder corresponds to the set $\tilde{\Sigma}_2=\{ \alpha(\theta)\leq\theta_1-\theta\leq\beta(\theta) \}\subset\tilde{\Sigma}$ where $\alpha$ and $\beta$ are implicitly defined by
\begin{equation*}
\begin{split}
- & \partial_1 h(\theta,\theta+\alpha(\theta))=a \\
- & \partial_1 h(\theta,\theta+\beta(\theta))=b.
\end{split}
\end{equation*}
The functions $\alpha$ and $\beta$ are $C^2$, $1$-periodic and the Legendre condition implies that $\alpha(\theta)<\beta(\theta)$. Moreover, we have that $\alpha(\theta)\downarrow\rho_-$ as $a\to -\infty$ and $\beta(\theta)\uparrow\rho_+$ as $b\to +\infty$. Now take two larger strips $\tilde{\Sigma}_1=\{ \tilde{a}\leq\theta_1-\theta\leq\tilde{b} \}$ and $\tilde{\Sigma}_\epsilon=\{ \tilde{a}+\epsilon<\theta_1-\theta<\tilde{b}-\epsilon \}$ such that $\tilde{\Sigma}_2\subset\tilde{\Sigma}_\epsilon\subset\tilde{\Sigma}_1\subset\tilde{\Sigma}$ (cfr figure).
\begin{figure}[h]
\begin{center}
\includegraphics[scale=0.7]{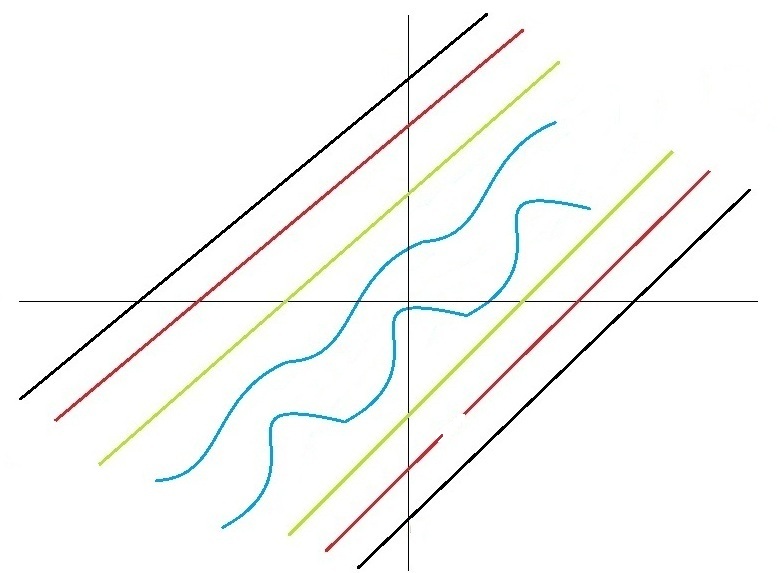}
\put(-195,20){$\rho_-$}
\put(-195,46){$\tilde{a}$}
\put(-195,75){$\tilde{a}+\epsilon$}
\put(-195,190){$\tilde{b}-\epsilon$}
\put(-195,224){$\tilde{b}$}
\put(-195,254){$\rho_+$}
\put(-200.6,288){$\wedge$}
\put(-190,286){$\theta_1$}
\put(-190,165){$\beta(\theta)$}
\put(-137,185){$\alpha(\theta)$}
\put(-20,140){$>$} 
\put(-20,127){$\theta$}
\put(-155,293){$\tilde{\Sigma}$}
\put(-135,285){$\tilde{\Sigma}_1$}
\put(-115,270){$\tilde{\Sigma}_\epsilon$}
\put(-120,235){$\tilde{\Sigma}_2$}
\end{center}
\end{figure}
Notice that, by compactness, there exists $\delta>0$ such that $\partial_{12}h<-\delta$ on $\tilde{\Sigma}_1$. Now, fix $\varepsilon>0$ small and extend $\partial_{12}h$ out of $\{\rho_-+\varepsilon\leq\theta_1-\theta_0\leq\rho_+-\varepsilon\}$ as a $C^1$ bounded function (it is not important how you do it). So we can suppose that there exists a constant $M_1>0$ such that
\begin{equation}\label{hbound}
\sup_{(\theta_0,\theta_1)\in\mathbb{R}^2}|\partial_{12}h|\leq M_1.
\end{equation}
 Consider $\chi$ a $C^\infty$ cut-off function of $\mathbb{R}^2$ such that
\begin{equation*}
\begin{sistema}
\chi=1 \mbox{ on }\tilde{\Sigma}_\epsilon \\
\chi=0 \mbox{ on } \{ \theta_1-\theta>\tilde{b}\}.
\end{sistema}
\end{equation*}
Moreover we can suppose that $\chi=\chi(\theta_1-\theta)$, $0\leq\chi\leq 1$ and $\chi>0$ on $\{\tilde{b}-\epsilon<\theta_1-\theta<\tilde{b}\}$.   
Define the new function 
$$
\Delta=\chi\partial_{12}h+(\chi-1)\delta.
$$
We notice that $\Delta\in C^1$, $\Delta(\theta_1+1,\theta+1)=\Delta(\theta_1,\theta)$ and 
\begin{equation*}
\begin{sistema}
\Delta=\partial_{12}h \mbox{ on }\tilde{\Sigma}_\epsilon \\
\Delta=-\delta \mbox{ on } \{ \theta_1-\theta>\tilde{b}\}
\end{sistema}
\end{equation*}
With a similar argument as in \cite{marobounce} we can consider the following Cauchy problem for the wave equation
\begin{equation*}
\begin{sistema}
\partial_{12}u=\Delta(\theta,\theta_1)\\
u(\theta,\theta+\tilde{a})=h(\theta,\theta+\tilde{a})\\
(\partial_2u-\partial_1u)(\theta,\theta+\tilde{a})=(\partial_2h-\partial_1h)(\theta,\theta+\tilde{a}).
\end{sistema}
\end{equation*}
 The solution $h^+$ is defined on the set $\{\theta_1-\theta>\tilde{a}+\epsilon \}$, is such that $h^+\in C^2$, $h^+(\theta_1+1,\theta+1)=h^+(\theta_1,\theta)$, $\partial_{12}h^+=\Delta$ and $h^+=h$ on $\tilde{\Sigma}_\epsilon$. Now perform an analogous argument to modify $\partial_{12}h$ also in the zone $\{ \theta_1-\theta<\tilde{a} \}$ finding $h^-$. Finally glue $h^+$ and $h^-$ through the common part $\tilde{\Sigma}_\epsilon$ to get a function $\tilde{h}$. Notice that $\partial_{12}\tilde{h}\leq -\delta$ on $\mathbb{R}^2$. The function $\tilde{h}$ generates via (\ref{gert}) a diffeomorphism $\tilde{f}(\theta,r)=(\theta_1,r_1)$ such that the relation 
$$
\frac{\partial \theta_1}{\partial r}   = -\frac{1}{\partial_{12}\tilde{h}}
$$
holds. So the diffeomorphism $\tilde{f}$ is $\beta$-twist with $\beta=1/\max\{-\partial_{12}\tilde{h}\}$ and satisfies property 5'. Moreover, as $h=\tilde{h}$ on $\tilde{\Sigma}_\epsilon$, the diffeomorphism $\tilde{f}$ coincides with $f$ on $\mathbb{T}\times [a,b]$. 
\end{proof}

It is not hard to guess that we are going to use this lemma to modify the diffeomorphism $F$ through its components $f_i$. So, it is worth introducing some notation. Given $f\in\mathcal{P}^{\rho_-,\rho_+}$ and an interval $[a,b]$ then the modified diffeomorphism $\tilde{f}$ with support $[a,b]$ is the diffeomorphism coming from lemma \ref{modif}. Given $F=f_1\circ\dots\circ f_N$ with $f_i\in\mathcal{P}^{\rho_-,\rho_+}$, we will call $\tilde{F}$ with support $[a,b]$ the diffeomorphism given by $\tilde{F}=\tilde{f}_1\circ\dots\circ \tilde{f}_N$ where every $\tilde{f}_i$ is supported in $[a,b]$. Moreover, notice that, if $f_i\in\mathcal{P}^\infty$ then trivially $\tilde{f}_i\equiv f_i$. Finally, $F$ has coordinates $(\Theta(\theta,r),R(\theta,r))$ while $f_i$ has coordinates $(\Theta^{(i)}(\theta,r),R^{(i)}(\theta,r))$ and the corresponding modifications have coordinates $(\tilde{\Theta}(\theta,r),\tilde{R}(\theta,r))$ and $(\tilde{\Theta}^{(i)}(\theta,r),\tilde{R}^{(i)}(\theta,r))$. 

\begin{lemma}\label{estimerre}
Consider $f\in\mathcal{P}^{\rho_-,\rho_+}$. There exists $K>0$ such that for every modified $\tilde{f}$ with support $[a,b]$
$$
|\tilde{R}(\theta,r)-r|\leq K\quad\mbox{for every } (\theta,r)\in\mathbb{T}\times\mathbb{R}
$$
uniformly in $[a,b]$.   
\end{lemma}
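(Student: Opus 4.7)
Let $\tilde h$ be the generating function of $\tilde f$ built in Lemma \ref{modif} and set $\tilde\psi(\theta,\theta_1):=\partial_1\tilde h(\theta,\theta_1)+\partial_2\tilde h(\theta,\theta_1)$. From the implicit relations (\ref{gert}) one has $\tilde R(\theta,r)-r=\tilde\psi(\theta,\tilde\Theta(\theta,r))$, and since $\tilde h(\theta+1,\theta_1+1)=\tilde h(\theta,\theta_1)$ the function $\tilde\psi$ depends only on $\theta\bmod 1$ and $s=\theta_1-\theta$. Thus it suffices to bound $\tilde\psi(\theta,\theta+s)$ uniformly in $(\theta,s)\in\mathbb T\times\mathbb R$ and in $[a,b]$.

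The plan is to treat the three natural regions separately. On the central strip $\tilde\Sigma_\epsilon$ we have $\tilde h=h$ so $\tilde\psi=\psi$, which is bounded by $M$ thanks to property $6$ of $f$. For the upper outer region $s>\tilde b$, where $\partial_{12}\tilde h\equiv-\delta$, I integrate the wave equation $\partial_{12}\tilde h=\Delta$ from the Cauchy line $\{\theta_1=\theta+\tilde a\}$ along the two characteristic families, splitting each characteristic integral at the interface $s=\tilde b$ between the inner region (where $\Delta=\partial_{12}h$ and the integral telescopes to a boundary term) and the outer one (where $\Delta\equiv-\delta$ contributes an explicit linear-in-$s$ term). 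The $\delta$-linear pieces cancel in the sum $\partial_1\tilde h+\partial_2\tilde h$, and after using the matching conditions with $h$ at $s=\tilde b$ one arrives at the clean identity
\[
\tilde\psi(\theta,\theta+s)=-r_b(\theta)+R(\theta+s-\tilde b,\,r_b(\theta+s-\tilde b)),
\]
where $r_b\colon\mathbb T\to\mathbb R$ is the periodic level-set function characterized by $\Theta(\theta,r_b(\theta))=\theta+\tilde b$. A symmetric argument yields the analogous expression for $s<\tilde a$. Note that this identity is automatically consistent with the interior estimate: at $s=\tilde b$ it collapses to $R(\theta,r_b(\theta))-r_b(\theta)$, which is $\le M$.

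Writing $\theta'=\theta+s-\tilde b$ and decomposing $\tilde\psi=[R(\theta',r_b(\theta'))-r_b(\theta')]+[r_b(\theta')-r_b(\theta)]$, the first bracket is at most $M$ by property $6$, and the task reduces to controlling $\mathrm{osc}_\theta\,r_b$ uniformly in $\tilde b\in(\rho_-,\rho_+)$. This will be the main obstacle: as $\tilde b\to\rho_\pm$ the level curve $\gamma_b=\{(\theta,r_b(\theta))\}$ escapes to infinity, yet it must stay uniformly \emph{narrow} in $r$. I plan to exploit exact symplecticity of $f$ (which forces $\int_0^1(R(\theta,r_b(\theta))-r_b(\theta))\,d\theta=0$ on $\gamma_b$) together with the implicit-differentiation identity $r_b'=(1-\Theta_\theta)/\Theta_r$, rewritten via the primitive $V$ of $R\,d\Theta-r\,d\theta$ as $r_b'=(\psi-V_\theta)/V_r$ with $\psi=R-r$. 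Since $\psi$, $V_\theta$ and $V_r$ are intrinsic to $f$ and their asymptotic behaviour near the ends of the cylinder is governed by properties $5'$ and $6$, one obtains a bound on $\sup_\theta|r_b(\theta)-\bar r_b|$ depending only on $f$. Assembling the three regional estimates yields $K=M+C$, independent of $[a,b]$, as required.
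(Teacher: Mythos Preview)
Your proposal is a plan rather than a proof, and the plan breaks at its most delicate point. First, your three ``natural regions'' omit the transition layers $\tilde b-\epsilon\le s\le\tilde b$ and $\tilde a\le s\le\tilde a+\epsilon$, where neither $\tilde h=h$ nor $\partial_{12}\tilde h=-\delta$ holds. The ``clean identity'' $\tilde\psi(\theta,\theta+s)=-r_b(\theta)+R(\theta+s-\tilde b,\,r_b(\theta+s-\tilde b))$ is obtained by tacitly collapsing that layer to zero width and matching $\tilde h$ with $h$ at $s=\tilde b$; but at $s=\tilde b$ one has $\tilde h\neq h$, so $\partial_1\tilde h(\theta,\theta+\tilde b)\neq -r_b(\theta)$ in general and the identity as written is not correct. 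Second, and more seriously, the whole scheme hinges on bounding $\mathrm{osc}_\theta\,r_b$ uniformly as $\tilde b\to\rho_+$. You call this the ``main obstacle'' and then only sketch an approach via $r_b'=(\psi-V_\theta)/V_r$; but $V_r=R\,\Theta_r$ is the product of a factor tending to $+\infty$ and one tending to $0$, with no control on the ratio coming from properties $5'$ and $6$ alone, so the argument does not close. It is not at all clear that a uniform oscillation bound on the level curves $\{\Theta-\theta=\tilde b\}$ follows from the axioms of $\mathcal P^{\rho_-,\rho_+}$.

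The paper's route is quite different and avoids level curves altogether. It writes $\tilde h$ via d'Alembert's formula started on the line $s=\tilde b-\epsilon$ (where $\tilde h=h$), integrates the term $\chi\,\partial_{12}h$ by parts against the cutoff, and reduces $|\partial_1\tilde h+\partial_2\tilde h|$ to three uniform ingredients: the bound $M$ from property~$6$, the \emph{fixed} width $\epsilon$ of the transition layer (independent of $[a,b]$), and the global bound $M_1$ on $|\partial_{12}h|$ from (\ref{hbound}). After the integration by parts the potentially large quantities $\partial_1 h$ and $\partial_2 h$ appear only in the combination $\partial_1 h+\partial_2 h$ (hence bounded by $M$) or through a mean-value increment over an interval of length at most $\epsilon$; no geometric information about $r_b$ is required.
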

\begin{proof}
We have to prove that, given a modification with support $[a,b]$, we have the estimate with the constant $K$ independent on $[a,b]$. Consider the generating function $\tilde{h}$ of $\tilde{f}$. We have to estimate the quantity
$$
|\partial_2\tilde{h}(\theta,\theta_1)+\partial_1\tilde{h}(\theta,\theta_1)|.
$$
Notice that, with the notation of the previous lemma, in $[\tilde{b}-\epsilon,\tilde{a}+\epsilon]$ we have $h\equiv\tilde{h}$ so the estimate comes directly from property $6.$ in the definition of the class $f\in\mathcal{P}^{\rho_-,\rho_+}$. If $\theta_1-\theta>\tilde{b}$ or $\theta_1-\theta<\tilde{a}$ then $\tilde{R}(\theta,r)=r$ and $K=0$. So we only have to study the cases $\tilde{b}-\epsilon\leq\theta_1-\theta\leq\tilde{b}$ and $\tilde{a}\leq\theta_1-\theta\leq\tilde{a}+\epsilon$. Let us study the first, being the second similar. We need d'Alambert formula, valid for a function $V\in C^2(\mathbb{R}^2)$: 
\begin{equation*}
\begin{split}
V(\theta,\theta_1)=&-\int^{\theta_1}_{\theta+\delta}d\eta \int^{\eta-\delta}_{\theta}\partial_{12}V(\xi,\eta)d\xi + V(\theta,\theta+\delta)+ \\ &  \int^{\theta_1}_{\theta+\delta}\partial_{2}V(\eta-\delta,\eta)d\eta
\end{split}
\end{equation*}
where $\delta\in\mathbb{R}$. Applying it to $\tilde{h}$ and choosing $\delta=\tilde{b}-\epsilon$ we get
\begin{equation*}
\begin{split}
\tilde{h}(\theta,\theta_1)=&-\int^{\theta_1}_{\theta+\tilde{b}-\epsilon}d\eta \int^{\eta-\tilde{b}+\epsilon}_{\theta}\Delta(\xi,\eta)d\xi + h(\theta,\theta+\tilde{b}-\epsilon)+ \\ &  \int^{\theta_1}_{\theta+\tilde{b}-\epsilon}\partial_{2}h(\eta-\tilde{b}+\epsilon,\eta)d\eta
\end{split}
\end{equation*}
Let us compute the partial derivatives.
The fundamental theorem of calculus gives
$$
\partial_1\tilde{h}(\theta,\theta_1)=\int^{\theta_1}_{\theta+\tilde{b}-\epsilon}\Delta(\theta,\eta)d\eta+\partial_1h(\theta,\theta+\tilde{b}-\epsilon).
$$
Remembering the definition of $\Delta$ we have, integrating by parts
\begin{equation*}
\begin{split}
&\int^{\theta_1}_{\theta+\tilde{b}-\epsilon}\Delta(\theta,\eta)d\eta=\int^{\theta_1}_{\theta+\tilde{b}-\epsilon}\chi(\eta-\theta)\partial_{12}h(\theta,\eta)d\eta+\delta\int^{\theta_1}_{\theta+\tilde{b}-\epsilon} \{\chi(\eta-\theta)-1\}d\eta=\\
&\chi(\theta_1-\theta)\partial_1h(\theta,\theta_1)-\partial_1h(\theta,\theta+\tilde{b}-\epsilon)-\int^{\theta_1}_{\theta+\tilde{b}-\epsilon}\chi^\prime(\eta-\theta)\partial_{1}h(\theta,\eta)d\eta\\
&+\delta\int^{\theta_1}_{\theta+\tilde{b}-\epsilon} \{\chi(\eta-\theta)-1\}d\eta
\end{split}
\end{equation*}
where we used the fact that $\chi(\tilde{b}-\epsilon)=1$. So
\begin{equation*}
\begin{split}
\partial_1\tilde{h}(\theta,\theta_1)= & \chi(\theta_1-\theta)\partial_1h(\theta,\theta_1) + \delta\int^{\theta_1}_{\theta+\tilde{b}-\epsilon} \{\chi(\eta-\theta)-1\}d\eta\\
& -\int^{\theta_1}_{\theta+\tilde{b}-\epsilon}\chi^\prime(\eta-\theta)\partial_{1}h(\theta,\eta)d\eta.
\end{split}
\end{equation*}
Similarly,
\begin{equation*}
\begin{split}
\partial_2\tilde{h}(\theta,\theta_1)= & \chi(\theta_1-\theta)\partial_2h(\theta,\theta_1) - \delta\int^{\theta_1-\tilde{b}+\epsilon}_{\theta} \{\chi(\theta_1-\xi)-1\}d\xi\\
& -\int^{\theta_1-\tilde{b}+\epsilon}_{\theta}\chi^\prime(\theta_1-\xi)\partial_{2}h(\xi,\theta_1)d\xi.
\end{split}
\end{equation*}
Now we can concentrate on the quantity
$$
|\partial_2\tilde{h}(\theta,\theta_1)+\partial_1\tilde{h}(\theta,\theta_1)|.
$$
To estimate it we first notice that
$$
|\chi(\theta_1-\theta)\partial_2h(\theta,\theta_1)+\chi(\theta_1-\theta)\partial_1h(\theta,\theta_1)|=|\chi(\theta_1-\theta)||\partial_2h(\theta,\theta_1)+\partial_1h(\theta,\theta_1)|\leq M
$$
using property $6$ in the definition of the class $\mathcal{P}^{\rho_+,\rho_-}$. Moreover, with the change of variable $\theta_1-\xi=\eta-\theta$ we get
$$
|\delta\int^{\theta_1}_{\theta+\tilde{b}-\epsilon} \{\chi(\eta-\theta)-1\}d\eta-\delta\int^{\theta_1-\tilde{b}+\epsilon}_{\theta} \{\chi(\theta_1-\xi)-1\}d\xi|=0.
$$ 
So we just have to estimate the quantity
$$
|\int^{\theta_1-\tilde{b}+\epsilon}_{\theta}\chi^\prime(\theta_1-\xi)\partial_{2}h(\xi,\theta_1)d\xi+ \int^{\theta_1}_{\theta+\tilde{b}-\epsilon}\chi^\prime(\eta-\theta)\partial_{1}h(\theta,\eta)d\eta|
$$
that, after the change of variable $\eta=\xi+\tilde{b}-\epsilon$ in the first integral and having noticed that $|\chi^\prime|$ is bounded, reduces to an estimate of
\begin{equation*}
\begin{split}
&\int^{\theta_1}_{\theta+\tilde{b}-\epsilon}|\partial_{2}h(\eta-\tilde{b}+\epsilon,\theta_1)+\partial_{1}h(\theta,\eta)|d\eta \\
&\leq |\theta_1-\theta-\tilde{b}+\epsilon|\max_{\theta+\tilde{b}-\epsilon\leq\eta\leq\theta_1}|\partial_{2}h(\eta-\tilde{b}+\epsilon,\theta_1)+\partial_{1}h(\theta,\eta)|.
\end{split}
\end{equation*}  
Remembering that we are working in the region $\tilde{b}-\epsilon\leq\theta_1-\theta\leq\tilde{b}$,
\begin{equation}\label{gui}
|\theta_1-\theta-\tilde{b}+\epsilon|\leq\epsilon.
\end{equation}
Now, by the Legendre condition, the function
$$
\Psi(\eta)=\partial_{2}h(\eta-\tilde{b}+\epsilon,\theta_1)+\partial_{1}h(\theta,\eta)
$$
is monotone, so $\max_{\theta+\tilde{b}-\epsilon\leq\eta\leq\theta_1}|\Psi(\eta)|$ is either $|\Psi(\theta_1)|$ or $|\Psi(\theta+\tilde{b}-\epsilon)|$. Suppose we are in the first case, being the other similar. We have
\begin{equation*}
\begin{split}
&|\Psi(\theta_1)|\leq|\partial_{2}h(\theta_1-\tilde{b}+\epsilon,\theta_1)-\partial_{2}h(\theta,\theta_1)|+|\partial_{1}h(\theta,\theta_1)+\partial_{1}h(\theta,\theta_1)|\\
&\leq|\partial_{12}h(c,\theta_1)||\theta_1-\theta-\tilde{b}+\epsilon|+M
\end{split}
\end{equation*}  
for some $c\in [\theta,\theta_1-\tilde{b}+\epsilon]$. Now we can conclude using (\ref{gui}) and (\ref{hbound}).       
\end{proof}

\begin{lemma}\label{estimo}
Let $F(\theta,r)$ be a diffeomorphism of $\mathbb{T}\times\mathbb{R}$. Assume that $F=f_1\circ\dots\circ f_N$ with $f_i\in\mathcal{P}^{\rho_+,\rho_-}$ for $i=1,\dots,N$. Then, for every $\omega\in (N\rho_-,N\rho_+)$ there exists three non negative constant $r_*$, $A$ and $B$ such that
\begin{equation*}
\begin{sistema}
\Theta(\theta,r)  -\theta\geq\omega+\eta \quad\mbox{for } r>r_*  \\
\tilde{\Theta}(\theta,r)  -\theta\geq\omega+\eta \quad\mbox{for } r>r_* \\
\Theta(\theta,r)  -\theta\leq\omega-\eta  \quad\mbox{for } r<-r_*  \\
\tilde{\Theta}(\theta,r)  -\theta\leq\omega-\eta \quad\mbox{for } r<-r_*
\end{sistema}
\end{equation*}
where $\tilde{F}$ has support $[-r_*-A^*,r_*+B^*]$ with $A^*>A$ and $B_*>B$.
\end{lemma}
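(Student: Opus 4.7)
I would first pick $\eta>0$ small enough that $\omega+\eta<N\rho_+$ and $\omega-\eta>N\rho_-$, and set $\epsilon:=(N\rho_+-\omega-\eta)/(2N)>0$; this ensures $N(\rho_+-\epsilon)\geq\omega+\eta$ and, symmetrically, $N(\rho_-+\epsilon)\leq\omega-\eta$. The whole plan is to show that, along any $N$-step orbit of $F$ or $\tilde F$ issued from $r_0>r_*$ (respectively $r_0<-r_*$), each per-step angular increment is at least $\rho_+-\epsilon$ (resp.\ at most $\rho_-+\epsilon$); telescoping then gives the claimed inequality.

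By property 5' of $\mathcal{P}^{\rho_+,\rho_-}$ applied to each $f_i$, there exists a common threshold $R_+>0$ such that
$$
\Theta^{(i)}(\theta,r)-\theta\geq\rho_+-\epsilon \qquad \text{for every }\theta\in\mathbb{R},\ i=1,\dots,N,\ r\geq R_+,
$$
and an analogous threshold $R_-$ for the lower end. Let $M$ denote the common bound from property 6 of the $f_i$'s and $K$ the uniform constant provided by Lemma \ref{estimerre}; without loss of generality $K\geq M$. I would then set
$$
r_*:=\max\{R_+,R_-\}+NK,\qquad A:=B:=NK.
$$

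For the unmodified $F$ starting at $r_0>r_*$: property 6 yields $|r_k-r_{k-1}|\leq M$, so $r_k\geq r_0-kM>r_*-NK\geq R_+$ for every $k=0,\dots,N-1$, whence each per-step angular increment is $\geq\rho_+-\epsilon$. Summing over $k$ gives $\Theta(\theta_0,r_0)-\theta_0\geq N(\rho_+-\epsilon)\geq\omega+\eta$.

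For the modified $\tilde F$ with support $[-r_*-A^*,r_*+B^*]$ and $A^*,B^*>NK$: Lemma \ref{estimerre}, applied uniformly to each $\tilde f_i$, gives $|r_k-r_{k-1}|\leq K$, hence again $r_k\geq r_*-NK\geq R_+$. At each step I would split into two cases. If $r_k\leq r_*+B^*$ then $(\theta_k,r_k)$ lies in the support of $\tilde f_{k+1}$, on which $\tilde f_{k+1}=f_{k+1}$, and the previous estimate applies verbatim. If instead $r_k>r_*+B^*$, I would invoke the twist of $\tilde f_{k+1}\in\mathcal{P}^\infty$ in the $r$-variable to obtain
$$
\tilde\Theta^{(k+1)}(\theta_k,r_k)-\theta_k \geq \tilde\Theta^{(k+1)}(\theta_k,r_*+B^*)-\theta_k = \Theta^{(k+1)}(\theta_k,r_*+B^*)-\theta_k \geq \rho_+-\epsilon,
$$
the middle equality holding because $r_*+B^*$ still belongs to the support of $\tilde f_{k+1}$, and the last inequality because $r_*+B^*\geq R_+$. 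Summing gives $\tilde\Theta(\theta_0,r_0)-\theta_0\geq\omega+\eta$. The estimates for $r_0<-r_*$ are symmetric, using $R_-$, the boundary value $-r_*-A^*\leq-R_-$, and $\rho_-+\epsilon$.

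The main technical point I anticipate is Case B in the $\tilde F$ argument, where the orbit escapes the support from above: here one must combine the monotonicity in $r$ of $\tilde\Theta^{(k+1)}$ (inherited through $\mathcal{P}^\infty$) with the identity $\tilde f_i=f_i$ on the support boundary, to transport the good estimate for the unmodified $f_i$ at $r=r_*+B^*$ to arbitrary $r>r_*+B^*$. The uniformity of $K$ in Lemma \ref{estimerre} with respect to the choice of support $[a,b]$ is precisely what allows a single $r_*$ to work for every admissible pair $(A^*,B^*)$.
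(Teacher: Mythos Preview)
Your argument is correct and takes essentially the same approach as the paper: per-step angular bounds from property~5', $r$-drift control via property~6 and Lemma~\ref{estimerre}, telescoping, and the use of twist monotonicity of $\tilde f_i$ to transport the estimate across the support boundary (the paper packages this last device into the $N=1$ base case of an induction on $N$, sketched for $N=1,2$, while you argue directly for general $N$). One minor slip: the single $\epsilon$ you define need not give $N(\rho_-+\epsilon)\leq\omega-\eta$; take $\epsilon$ small enough for both ends, e.g.\ $\epsilon=\min\{(N\rho_+-\omega-\eta),(\omega-\eta-N\rho_-)\}/(2N)$.
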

\begin{proof}
For simplicity of notation, let us prove it for $N=2$. The proof goes by induction. If $N=1$, then $\omega\in (\rho_-,\rho_+)$ and then by property $5'$ in the definition of the class $\mathcal{P}^{\rho_+,\rho_-}$ there exist $r_*>0$ and $\eta>0$ such that
\begin{equation*}
\begin{sistema}
\Theta(\theta,r)  -\theta\geq\omega+\eta \quad\mbox{for } r>r_*  \\
\Theta(\theta,r)  -\theta\leq\omega-\eta  \quad\mbox{for } r<-r_*  
\end{sistema}
\end{equation*}
Every modified $\tilde{F}$ outside $[-r_*,r_*]$ is twist, so
$$
\frac{\partial(\tilde{\Theta}(\theta,r)  -\theta)}{\partial r}>0
$$
and, remembering that $F(\theta,\pm r_*)=\tilde{F}(\theta,\pm r_*)$ for every $\theta$, one can verify that also 
\begin{equation*}
\begin{sistema}
\tilde{\Theta}(\theta,r)  -\theta\geq\omega+\eta \quad\mbox{for } r>r_*  \\
\tilde{\Theta}(\theta,r)  -\theta\leq\omega-\eta  \quad\mbox{for } r<-r_*.  
\end{sistema}
\end{equation*}
Now suppose that $F=f_1\circ f_2$ so that we fix $\omega\in (2\rho_-,2\rho_+)$. From the case $N=1$ there exist $\rho_*$ and $\eta$ such that, for $i=1,2$,   
\begin{equation}\label{indu}
\begin{sistema}
\Theta^{(i)}(\theta,r)  -\theta\geq\frac{\omega+\eta}{2} \quad\mbox{for } r>\rho_*  \\
\tilde{\Theta}^{(i)}(\theta,r)  -\theta\geq\frac{\omega+\eta}{2} \quad\mbox{for } r>\rho_* \\
\Theta^{(i)}(\theta,r)  -\theta\leq\frac{\omega-\eta}{2}  \quad\mbox{for } r<-\rho_*  \\
\tilde{\Theta}^{(i)}(\theta,r)  -\theta\leq\frac{\omega-\eta}{2} \quad\mbox{for } r<-\rho_*.
\end{sistema}
\end{equation}
Moreover, as $f_2$ preserves the end, there exists $r_*>\rho_*$ such that $R^{(2)}(\theta,r)>\rho_*$ for $r>r_*$. 
So, for $r>r_*$
$$
\Theta(\theta,r)-\theta=\Theta^{(1)}(\Theta^{(2)}(\theta,r),R^{(2)}(\theta,r))-\Theta^{(2)}(\theta,r)+\Theta^{(2)}(\theta,r)-\theta\geq\omega+\eta
$$
Analogously we can suppose that
$$
\Theta(\theta,r)-\theta\leq\omega-\eta \quad\mbox{for }r<-r_*. 
$$
Now take the modified $\tilde{f}_i$ with support bigger than $[-r_*-K,r_*+K]$ where $K$ is the constant coming from lemma \ref{estimerre}. Let us estimate the quantity
$$
\tilde{\Theta}(\theta,r)-\theta=\tilde{\Theta}^{(1)}(\tilde{\Theta}^{(2)}(\theta,r),\tilde{R}^{(2)}(\theta,r))-\tilde{\Theta}^{(2)}(\theta,r)+\tilde{\Theta}^{(2)}(\theta,r)-\theta
$$
for $r>r_*$. It comes from (\ref{indu}) that $\tilde{\Theta}^{(2)}(\theta,r)-\theta\geq\frac{\omega+\eta}{2}$. It remains to prove that
$$
\tilde{\Theta}^{(1)}(\tilde{\Theta}^{(2)}(\theta,r),\tilde{R}^{(2)}(\theta,r))-\tilde{\Theta}^{(2)}(\theta,r)\geq\frac{\omega+\eta}{2}.
$$
If $r_*<r\leq r_*+K$ then $\tilde{R}^{(2)}(\theta,r)=R^{(2)}(\theta,r)>\rho_*$ and we get the estimation through (\ref{indu}). If $r>r_*+K$ then, by the definition of $K$, we have $\tilde{R}^{(2)}(\theta,r)>r_*>\rho_*$ and we conclude as before. In an analogous way we have the others estimates.
\end{proof}

\begin{lemma}\label{entra}
Let $F(\theta,r)$ be a diffeomorphism of $\mathbb{T}\times\mathbb{R}$ that possesses an invariant curve $\Gamma$. Assume that $F=f_1\circ\dots\circ f_N$ with $f_i\in\mathcal{P}^{\rho_+,\rho_-}$ for $i=1,\dots,N$. Then, for every $\omega\in (N\rho_-,N\rho_+)$ there exist three non negative constants $r_*$, $A$ and $B$, such that the following holds. Let $(\theta_n,r_n)$ be an orbit of $F$ or of a modified $\tilde{F}$ with support $[-r_*-A^*,r_*+B^*]$ with $A^*>A$ and $B_*>B$. Suppose that for every $\eta>0$ we have
$$
\liminf_{n\to\infty}\frac{\theta_n}{n}<\omega+\eta\quad\mbox{and}\quad \limsup_{n\to\infty}\frac{\theta_n}{n}>\omega-\eta
$$
then there exists ${\bar{n}}\in\mathbb{Z}$ such that
$$
(\theta_{\bar{n}},r_{\bar{n}})\in\mathbb{T}\times(-r_*,r_*).
$$   
\end{lemma}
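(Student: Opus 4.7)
The plan is to combine the twist estimates of Lemma \ref{estimo} with the observation that the invariant curve $\Gamma$ separates the cylinder into two invariant components, which is what prevents the orbit from oscillating between the top and the bottom of the cylinder.

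First I would choose $r_*$ to satisfy two requirements simultaneously: on one hand, $r_*$ should be large enough that the conclusion of Lemma \ref{estimo} provides some $\eta>0$ with
\begin{equation*}
\Theta(\theta,r)-\theta\geq\omega+\eta,\quad \tilde{\Theta}(\theta,r)-\theta\geq\omega+\eta\quad\text{for }r>r_{*},
\end{equation*}
and the symmetric inequalities below $-r_{*}$; on the other hand, $r_{*}$ should be large enough that $\Gamma\subset\mathbb{T}\times(-r_{*},r_{*})$. For the second requirement I would enlarge the $r_{*}$ furnished by Lemma \ref{estimo} if necessary, since $\Gamma$ is a compact non-contractible Jordan curve and therefore bounded in the $r$-direction.

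Next I need to arrange that $\Gamma$ remains invariant under any admissible modification $\tilde{F}$. Let $K$ be the uniform constant from Lemma \ref{estimerre}, and let $M$ be the constant from property $6$ in the definition of $\mathcal{P}^{\rho_{+},\rho_{-}}$. Each factor $f_{i}$ and each $\tilde{f}_{i}$ displaces points vertically by at most $\max(M,K)$, so after at most $N$ intermediate applications a point of $\Gamma$ remains in $\mathbb{T}\times[-r_{*}-N\max(M,K),\,r_{*}+N\max(M,K)]$. Taking $A=B=N\max(M,K)$, for every $A^{*}>A$ and $B^{*}>B$ all intermediate $\tilde{f}_{i}$-iterates of $\Gamma$ lie inside the support $\mathbb{T}\times[-r_{*}-A^{*},r_{*}+B^{*}]$ on which $\tilde{f}_{i}\equiv f_{i}$. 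Hence $\tilde{F}(\Gamma)=F(\Gamma)=\Gamma$, so $\Gamma$ is invariant for $\tilde{F}$ as well.

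Now I would argue by contradiction: suppose the orbit $(\theta_{n},r_{n})$ never meets $\mathbb{T}\times(-r_{*},r_{*})$, so $|r_{n}|\geq r_{*}$ for every $n\in\mathbb{Z}$. Since $\Gamma$ is a non-contractible Jordan curve invariant under the map generating the orbit, it splits $\mathbb{T}\times\mathbb{R}$ into two invariant components, and because $\Gamma\subset\mathbb{T}\times(-r_{*},r_{*})$ the orbit lies entirely in $\mathbb{T}\times[r_{*},+\infty)$ or entirely in $\mathbb{T}\times(-\infty,-r_{*}]$. In the first case, the chosen twist estimate gives $\theta_{n+1}-\theta_{n}\geq\omega+\eta$ for every $n$, so $\liminf_{n\to\infty}\theta_{n}/n\geq\omega+\eta$, contradicting the hypothesis that $\liminf_{n\to\infty}\theta_{n}/n<\omega+\eta$. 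In the second case the symmetric estimate gives $\limsup_{n\to\infty}\theta_{n}/n\leq\omega-\eta$, contradicting the other hypothesis. The main obstacle is the robustness step in the previous paragraph, namely guaranteeing that $\Gamma$ survives as an invariant curve of $\tilde{F}$; this is exactly what the uniform bound of Lemma \ref{estimerre} is designed to provide.
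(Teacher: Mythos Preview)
Your proof is correct and follows the same route as the paper: enlarge $r_*$ so that $\Gamma\subset\mathbb{T}\times(-r_*,r_*)$, use $\Gamma$ as a barrier separating the cylinder into two invariant components (the paper notes explicitly that this uses preservation of the ends, which you might add), and derive a contradiction from the twist estimates of Lemma~\ref{estimo}. The only difference is that the paper takes $A,B$ directly from Lemma~\ref{estimo} and asserts the barrier property for $\tilde F$ without further comment, whereas you explicitly compute $A=B=N\max(M,K)$ to force the intermediate iterates of $\Gamma$ to stay in the support; this extra care is justified and does not change the argument.
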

\begin{proof}
Let $r_*$, $A$ and $B$ the constant coming from lemma \ref{estimo}. We can suppose that $r_*$ is large enough to have $\Gamma\subset\mathbb{T}\times(-r_*,r_*)$. The invariant curve divides the cylinder in two components, the upper and the lower and both are $F$-invariant (resp. $\tilde{F}$-invariant). Notice that to prove it we must use the fact that $F$ (resp. $\tilde{F}$) preserves the ends. It means that there cannot exist orbits that jump from the top to the bottom of the cylinder. So, if an orbit $(\theta_n,r_n)$ of $F$ or $\tilde{F}$ is such that $r_n>r_*$ for every $n$ or $r_n<-r_*$ for every $n$ then    
$$
\liminf_{n\to\infty}\frac{\theta_n}{n}\geq\omega+\eta\quad\mbox{or}\quad \limsup_{n\to\infty}\frac{\theta_n}{n}\leq\omega-\eta
$$
respectively, in contradiction with the hypothesis.
\end{proof}

%
%
%

Now we are ready for the

\begin{proof}[Proof of theorem \ref{comp}]
Fix $\omega\in (N\rho_-,N\rho_+)$, consider the constants $r_*$, $A$ and $B$ coming from lemma \ref{entra}.  By hypothesis, we can find two invariant curves $\Gamma_+$ and $\Gamma_-$ contained, respectively in $r>r_*$ or $r<r_*$. Let $\Sigma$ be the compact region defined by such curves. Let $F^{(j)}=f_1\circ\dots\circ f_j$ for $j=1,\dots,N$. The sets $F^{(j)}(\Sigma)$ are compacts and so one can find a region $\tilde{\Sigma}$, defined by two invariant curves such that  
\begin{equation*}
\Sigma\cup F^{(1)}(\Sigma)\cup F^{(2)}(\Sigma)\cup\dots\cup F^{(N)}(\Sigma)\subset int\tilde{\Sigma}.
\end{equation*}
Analogously, we can find and a region $\Sigma_1=\mathbb{T}\times [-r_*-A^*,r_*+B^*]$ with $A_*>A$ and $B_*>B$ such that
\begin{equation*}
\tilde{\Sigma}\cup F^{(1)}(\tilde{\Sigma})\cup F^{(2)}(\tilde{\Sigma})\cup\dots\cup F^{(N)}(\tilde{\Sigma})\subset int\Sigma_1.
\end{equation*}
Now modify every $f_i$ outside the strip $\Sigma_1$ applying lemma \ref{modif} and find the corresponding $\tilde{f}_i$. So we get $\tilde{F}=\tilde{f}_1\circ\dots\circ \tilde{f}_n$. The diffeomorphisms $\tilde{F}$ satisfies the hypothesis of theorem \ref{matteo} so we get an orbit $(\bar{\theta}_n,\bar{r}_n)$ of $\tilde{F}$ with rotation number $\omega$. By lemma \ref{entra} there exists $\bar{n}$ such that $(\bar{\theta}_{\bar{n}},\bar{r}_{\bar{n}})\in\Sigma$. Notice that $\Gamma_+$ and $\Gamma_-$ are also invariant curves for $\tilde{F}$ and so by the invariance on $\Sigma$ we have that $(\bar{\theta}_n,\bar{r}_n)\in\tilde{\Sigma}$ for every $n$. But in $\tilde{\Sigma}$ we have $F=\tilde{F}$ so that $(\bar{\theta}_n,\bar{r}_n)$ is also an orbit of $F$. Remembering corollary \ref{relmat} we get the thesis.
\end{proof}

Finally, we are ready for

\begin{proof}[Proof of theorem \ref{teo3}]
From proposition \ref{pll} we can apply theorem \ref{comp} to the Poincaré map $\Pi$ of system (\ref{rpend4}) and find for every $\omega\in (-T,T)$ two functions $\phi$ and $\eta$ such that
\begin{equation}\label{perm}
\phi(\xi+1)=\phi(\xi)+1,\quad \eta(\xi+1)=\eta(\xi)
\end{equation}
\begin{equation}\label{poin}
\Pi(\phi(\xi),\eta(\xi))=(\phi(\xi+\omega),\eta(\xi+\omega)).
\end{equation}
Let $X_\xi(t)=(Q_\xi(t),P_\xi(t))$ be the solution of (\ref{rpend4}) with initial condition $(\phi(\xi),\eta(\xi))$. Notice that from (\ref{perm}) and uniqueness we have that
$$
X_{\xi+1}(t)=X_\xi(t)+(1,0)
$$
and from (\ref{poin}) and the definition of $\Pi$,
$$
X_\xi(t+T)=X_{\xi+\omega}(t).
$$
so that (\ref{piu}) is verified.
Finally, consider the limit 
$$
\lim_{t\to\infty}\frac{Q_\xi(t)}{t}.
$$
We have that, for $nT\leq t\leq (n+1)T$
$$
\frac{Q_\xi(t)}{t}=\frac{Q_\xi(t)-Q_\xi(nT)}{t}+\frac{Q_\xi(nT)}{nT}\frac{nT}{t}
$$ 
where, being the vector field in (\ref{rpend4}) bounded, the quantity $Q_\xi(t)-Q_\xi(nT)$ is bounded.
So we can compute
$$
\lim_{t\to\infty}\frac{Q_\xi(t)}{t}=\lim_{n\to\infty}\frac{Q_\xi(nT)}{nT}=\lim_{n\to\infty}\frac{Q_{\xi+n\omega}(0)}{nT}= \lim_{n\to\infty}[\frac{Q_{\xi+\{n\omega\}}(0)}{nT}+\frac{[n\omega]}{nT}]=\frac{\omega}{T}
$$ 
where $[x]$ denotes the integer part of $x$ and $\{x\}=x-[x]$.
\end{proof}

\section*{Acknowledgements}
I wish to thank Professor Rafael Ortega for his constant supervision and support.

\bibliographystyle{plain}
\bibliography{biblio4}

\begin{thebibliography}{10}

\bibitem{alonsoortega}
J.~M. Alonso and R.~Ortega.
\newblock Unbounded solutions of semilinear equations at resonance.
\newblock {\em Nonlinearity}, 9(5):1099--1111, 1996.

\bibitem{bereanujebeleanmawhin}
C.~Bereanu, P.~Jebelean, and J.~Mawhin.
\newblock Periodic solutions of pendulum-like perturbations of singular and
  bounded {$\phi$}-{L}aplacians.
\newblock {\em J. Dynam. Differential Equations}, 22(3):463--471, 2010.

\bibitem{bereanutorres}
C.~Bereanu and P.~J. Torres.
\newblock Existence of at least two periodic solutions of the forced
  relativistic pendulum.
\newblock {\em Proc. Amer. Math. Soc.}, 140(8):2713--2719, 2012.

\bibitem{brezismawhin}
H.~Brezis and J.~Mawhin.
\newblock Periodic solutions of the forced relativistic pendulum.
\newblock {\em Differential Integral Equations}, 23(9-10):801--810, 2010.

\bibitem{fondatoader}
A.~Fonda and R.~Toader.
\newblock Periodic solutions of pendulum-like {H}amiltonian systems in the
  plane.
\newblock {\em Adv. Nonlinear Stud.}, 12(2):395--408, 2012.

\bibitem{herman1}
M.R. Herman.
\newblock {\em Sur les courbes invariantes par les diff\'eomorphismes de
  l'anneau. {V}ol. 1}, volume 103 of {\em Ast\'erisque}.
\newblock Soci\'et\'e Math\'ematique de France, Paris, 1983.

\bibitem{herman2}
M.R. Herman.
\newblock Sur les courbes invariantes par les diff\'eomorphismes de l'anneau.
  {V}ol.\ 2.
\newblock {\em Ast\'erisque}, (144):248, 1986.

\bibitem{ortegakunze}
M.~Kunze and R.~Ortega.
\newblock Twist mappings with non-periodic angles.
\newblock In {\em Stability and bifurcation theory for non-autonomous
  differential equations}, volume 2065 of {\em Lecture Notes in Math.}, pages
  267--302. Springer, Berlin, 2013.

\bibitem{levikam}
M.~Levi.
\newblock K{AM} theory for particles in periodic potentials.
\newblock {\em Ergodic Theory Dynam. Systems}, 10(4):777--785, 1990.

\bibitem{marobounce}
S.~Mar\`o.
\newblock {Coexistence of bounded and unbounded motions in a bouncing ball
  model}.
\newblock {\em Nonlinearity}, 26:1439--1448, 2013.

\bibitem{marotopol}
S.~Mar\`o.
\newblock {Periodic solutions of a forced relativistic pendulum via twist
  dynamics}.
\newblock {\em to appear in Topol. Methods Nonlinear Anal., available at
  http://www.ugr.es/local/ecuadif/fuentenueva.htm}, 2013.

\bibitem{mathertop}
{J. N.} Mather.
\newblock Existence of quasiperiodic orbits for twist homeomorphisms of the
  annulus.
\newblock {\em Topology}, 21(4):457--467, 1982.

\bibitem{matherams}
{J. N.} Mather.
\newblock Variational construction of orbits of twist diffeomorphisms.
\newblock {\em J. Amer. Math. Soc.}, 4(2):207--263, 1991.

\bibitem{moserinvcurve}
J.~Moser.
\newblock On invariant curves of area-preserving mappings of an annulus.
\newblock {\em Nachr. Akad. Wiss. G\"ottingen Math.-Phys. Kl. II}, 1962:1--20,
  1962.

\bibitem{moserbook}
J.~Moser.
\newblock {\em Selected chapters in the calculus of variations}.
\newblock Lectures in Mathematics ETH Z\"urich. Birkh\"auser Verlag, Basel,
  2003.
\newblock Lecture notes by Oliver Knill.

\bibitem{ortegaasy}
R.~Ortega.
\newblock Asymmetric oscillators and twist mappings.
\newblock {\em J. London Math. Soc. (2)}, 53(2):325--342, 1996.

\bibitem{ortegaadv}
R.~Ortega.
\newblock Invariant curves of mappings with averaged small twist.
\newblock {\em Adv. Nonlinear Stud.}, 1(1):14--39, 2001.

\bibitem{ortegalisboa}
R.~Ortega.
\newblock Twist mappings, invariant curves and periodic differential equations.
\newblock In {\em Nonlinear analysis and its applications to differential
  equations ({L}isbon, 1998)}, volume~43 of {\em Progr. Nonlinear Differential
  Equations Appl.}, pages 85--112. Birkh\"auser Boston, Boston, MA, 2001.

\bibitem{torres1}
{P. J.} Torres.
\newblock Periodic oscillations of the relativistic pendulum with friction.
\newblock {\em Phys. Lett. A}, 372(42):6386--6387, 2008.

\bibitem{you}
J.~You.
\newblock Invariant tori and {L}agrange stability of pendulum-type equations.
\newblock {\em J. Differential Equations}, 85(1):54--65, 1990.

\end{thebibliography}

\end{document}